\crefname{hypothesis}{Hypothesis}{Hypotheses}
\title{A provably efficient monotonic-decreasing algorithm for shape optimization in Stokes flows by phase-field approaches}
\author{Futuan Li\thanks{Department of Mathematics, Southern University of Science and Technology, Shenzhen 518055, China,
  \email{11930689@mail.sustech.edu.cn}.}
\and Jiang Yang\thanks{SUSTech International Center for Mathematics \&Guangdong Provincial Key Laboratory of Computational Science and Material Design, Southern University of Science and Technology, Shenzhen 518055, China,  \email{yangj7@sustech.edu.cn}.}
 }
\newcommand*{\addFileDependency}[1]{% argument=file name and extension
  \typeout{(#1)}% latexmk will find this if $recorder=0 (however, in that case, it will ignore #1 if it is a .aux or .pdf file etc and it exists! if it doesn't exist, it will appear in the list of dependents regardless)
  \@addtofilelist{#1}% if you want it to appear in \listfiles, not really necessary and latexmk doesn't use this
  \IfFileExists{#1}{}{\typeout{No file #1.}}% latexmk will find this message if #1 doesn't exist (yet)
}
\begin{document}

\maketitle

% REQUIRED
\begin{abstract}
In this work, we study shape optimization problems in the Stokes flows. By phase-field approaches, the resulted total objective function consists of the dissipation energy of the fluids and the Ginzburg--Landau energy functional as a regularizing term for the generated diffusive interface, together with Lagrangian multiplayer for volume constraint. An efficient decoupled scheme is proposed to implement by the gradient flow approach to decrease the objective function. In each loop, we first update the velocity field by solving the Stokes equation with the phase field variable given in the previous iteration, which is followed by updating the phase field variable by solving an Allen--Cahn-type equation using a stabilized scheme. We then take a cut-off post-processing for the phase-field variable to constrain its value in $[0,1]$. In the last step of each loop, the Lagrangian parameter is updated with an appropriate artificial time step. We rigorously prove that the proposed scheme permits an unconditionally monotonic-decreasing property, which allows us to use the adaptive mesh strategy. To enhance the overall efficiency of the algorithm, in each loop we update the phase field variable and Lagrangian parameter several time steps but update the velocity field only one time. Numerical results for various shape optimizations are presented to validate the effectiveness of our numerical scheme. 
\end{abstract}

% REQUIRED
\begin{keywords}
  shape optimization in Stokes-flows, phase-field method, decoupled schemes, energy stability
\end{keywords}

% REQUIRED
\begin{AMS}
  68Q25, 68R10, 68U05
\end{AMS}

\section{Introduction}
\label{sec:introduction}
Shape optimization in fluid mechanics has attracted extensive attentions in recent years \cite{Sokolowski92Introduction,Mohammadi01Applied,Haslinger03Introduction,Guest06Topology,Borrvall03Topology}.
It has been widely applied spanning from the optimizations of transport vehicles like airplanes and cars \cite{Hazra08Multigrid}, biomechanical and industrial production processes \cite{Abraham05Shape,Zhang15Topology}, to the optimization of pipe bending \cite{Gersborg-Hansen05Topology}. 

Different approaches have been proposed to numerically solve the shape optimization problem, among which shape sensitivity analysis is usually used in the field of shape optimization in fluids. The level set method has gained plenty of attention to numerical solutions to partial differential equations \cite{Osher88Fronts,Wang03A,Zhu10Variational,Allaire04Structural,Allarie14Shape}, and it is also a popular way for shape optimization in fluid \cite{Jiang16The,Zhang15Topology} without an unfavorable procedure of re-meshing. The target shape is represented as a zero contour of the level set function, and the optimized shape is archived by updating the level set function via the Hamilton-Jacobi equation. However, re-initialization is always required to keep the regularity of the level set function. 
Recently, the threshold dynamics method \cite{ChenAn,Esedoglu06Threshold,Wang17An} has been applied to shape optimization for fluids. The authors proposed an iterative scheme that has a monotonic-decaying property.

The phase-field method is an alternative approach to the level set method for various free boundary problems \cite{Sun07Sharp,Shen09An}, shape and topology optimization for elastic structures \cite{Blank16Sharp,Takezawa10Shape,Wallin12Optimal} and photonic devices \cite{Takezawa14Phase,Wu18A}. It is always served as an easy-to-use methodology for numerical simulations of the phase transition phenomenon \cite{Gaginalp86An}, as well as the two-phase flow simulations \cite{Jacqmin99Calculation,Liu03A,Yue05Diffuse,Zhou103D}. Bourdin and Chambolle \cite{Bourdin03Design-dependent} firstly adopted the phase-field model to shape and topology optimization and investigated the feasibility of the phase-field model. In the approach, the phase-field function is used to keep track of the motion of the interface. Differing from the level set method, the phase-field method stands out in its treatment of the interface as a physically diffuse thin layer, where the interface is sharp conceptually but regularized numerically by a continuous function between $[0,1]$. Due to the generation of the interface, there is usually mixing energy governing the dynamics of the diffuse interface. Wallin \cite{Wallin12Optimal} and Blank \cite{Blank14Relating} use the Ginzburg-Landau energy as the mixing energy over the interfacial layer, but the mixing energy can be dated back to Van der Waals energy. In the framework of the phase-field model, the Ginzburg-Landau energy is considered as a regularization item of the objective functional, including the cost of intermediate material densities and the cost of creating surfaces. To preserve the existence of the proposed phase-field model, a Moreau--Yosida regularized version for shape optimization is introduced in \cite{Garcke15Numerical}, which leads to the deficiency of monotonic-delaying property. Indeed, there have been plenty of research jobs on the simulations \cite{Takezawa10Shape,Wallin12Optimal,Blank14Relating,Garcke15Numerical,Garcke17A,Li19A} for shape optimization in the past few decades. But to the best of our knowledge, under the constraint $\phi\in[0,1]$, there is still no decoupled monotonic-decaying scheme for the phase-field model based on shape optimization of incompressible flows up to now.

Stimulated by the pioneering work \cite{Garcke15Numerical,Li19A,ChenAn,Li20Arbitrarily}, we propose an efficient decoupled monotonic-decaying phase-field scheme for shape optimization and constrain the phase-field values in $[0,1]$ by a cut-off postprocessing. It is an iterative method via updating the velocity variable, the phase variable, and the volume parameter. In detail, To preserve the monotonic-decaying property, three key points need to deal with in every iterative loop: 
\begin{itemize}
	\item The penalized term $\alpha(\phi)$ for objective energy is designed to be a quadratic function, which forces itself not negative.
	\item Reset $\phi = 1$ if $\phi >1$ and set $\phi=0$ if $\phi < 0$ after solving the corresponding phase-field equation, which constraints its value in $[0,1]$.
	\item The Lagrange parameter $\lambda$ for the volume constrained condition is updated by a variable step to satisfy the monotonic-decaying property after a cut-off postprocessing for the phase-field function.
\end{itemize}

Therefore, the phase-field model can be mainly solved in an alternative way: a Stokes equation, a time-dependent Allen-Cahn-type equation, a cut-off postprocessing, and an evolution equation for the constraint of a given volume. In the time discretization for the Allen-Cahn-type equation, there have been plenty of linear unconditionally energy stable schemes for the phase-field equation to be researched recently \cite{Baskaran13Convergence,Guan14A,Shen10Numerical,Zhao17Numerical,Shen19A,Shen18Convergence,Shen18The}. In this paper, we merely focus on the total monotonic-decaying scheme for shape optimization. A semi-implicit stabilized scheme for shape optimization is proposed, and we prove that it is energy stable.
In space discretization, we adopt the common $P2-P1$ finite element method to solve the Stokes equation and the $P1$ finite element to solve the phase-field equation. In our implementation, projection and interpolation techniques are used to treat the mismatched solutions of the phase-field equation and the Stokes equation.

% The outline is not required, but we show an example here.
The paper is organized as follows. In \cref{sec:MP}, we describe the phase-field model for shape optimization. In \cref{sec:ENS}, we propose a decoupled, energy monotonic-decaying scheme. In \cref{sec:FED}, we give a detail on the finite element approximation of Stokes equation and phase-field equation, which is compatible with the energy monotonic-decaying scheme of \cref{sec:ENS}. In \cref{sec:Numerical-Simu}, various numerical results are presented to validate the effectiveness of the proposed schemes. The conclusions follow in
\cref{sec:conclusions}.

\section{Model problem}
\label{sec:MP}
In this section, we will introduce the phase-field model for shape optimization in fluids. The aim of shape optimization is to find an ultimate optimal shape under the constraints of Stokes equation and a given volume.
\subsection{The original model}
Let $\mathbf{u}$ and $p$ be the velocity and the pressure of incompressible flows, respectively, which are defined in a bounded Lipschitz domain $\Omega$. 

A model for shape optimization in Stokes flows can be described as follows \cite{Garcke15Numerical}:
\begin{equation} \label{eq:original-model}
 \min_{\Omega} J_0(\Omega):=\int_{\Omega} \frac{1}{2} \left|\nabla \mathbf{u}\right|^2 d \mathbf{x}  + \eta P_{\Omega},
\end{equation}
subject to
\begin{equation} \label{eq:original-model-volume}
\left|\Omega\right|=\beta V_0,
\end{equation}
and
\begin{equation} \label{eq:stoke-eq}
\left\{
\begin{aligned}
& - \Delta \mathbf{u} + \nabla p  = 0,   &\mathbf{x} \in \Omega,\\
& \nabla \cdot \mathbf{u} = 0, & \mathbf{x} \in \Omega,\\
& \mathbf{u} = \mathbf{g}, & \mathbf{x} \in \Gamma_{in}, \\
& \mathbf{u} = \mathbf{0}, & \mathbf{x} \in \Gamma_{wall}, \\
& \frac{\partial \mathbf{u}}{\partial \mathbf{n}} + p \mathbf{n} = \mathbf{0}, &  \mathbf{x} \in \Gamma_{out}.
\end{aligned}
\right.
\end{equation}
where the parameter $\eta$ is a positive penalized constant, and the symbol $P_{\Omega}$ denotes the length of the perimeter of the free boundary. The first term of the objective functional $J_0(\Omega)$ denotes the compliance energy in the fluid \cite{Borrvall03Topology,Garcke15Numerical}.

\begin{figure}[H]
	\centering
	\includegraphics[width=0.6\textwidth]{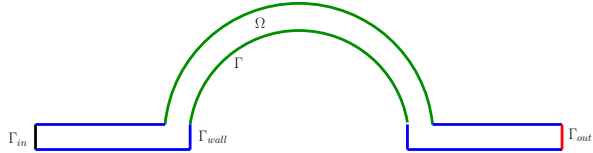} 
	\caption{The domain $\Omega$ for shape optimization with different boundaries: the inflow boundary $\left(\Gamma_{in}\right)$, the wall boundary $\left(\Gamma_{wall}\right)$, the outflow boundary $\left(\Gamma_{out}\right)$, and the free boundary $\left(\Gamma\right)$.}
	\label{fig:domain-omega}
\end{figure} 

The boundary conditions of the Stokes equation are specified (e.g. Fig.\ref{fig:domain-omega}). The boundary $\partial \Omega$ consists of four parts: $\partial \Omega:=\Gamma_{in} \cup \Gamma_{wall} \cup \Gamma_{out} \cup \Gamma$. The inflow boundary $\Gamma_{in}$, the wall boundary $\Gamma_{wall}$, and the outflow boundary $\Gamma_{out}$ are fixed during the optimized procedure of shape optimization, while the free boundary $\Gamma$ is the concerning part as an optimized shape. There is a natural Newman boundary condition on $\Gamma_{out}$ while Dirichlet boundary conditions are shown on $\Gamma_{in}\cap \Gamma_{wall}$.

\subsection{Phase-field method}
\label{sec:Mat}
The idea of the phase-field method is to assume that the interface is a diffuse thin layer, as plotted in the left figure of Figure \ref{fig:design-concept}. In this sense, the interface is sharp conceptually but regularized numerically. The phase-field function $\phi$ is defined over the whole design domain $D$ $\left(\in \mathbb{R}^d,\ d=2,3\right)$, which consists of three different parts, the fluid domain $\Omega$ with $\phi=1$, the fictitious diffuse interface $\xi$ with $0<\phi<1$, and the void domain $D \backslash (\Omega \cup \xi)$ with $\phi=0$, as plotted in the right figure of Figure \ref{fig:design-concept}.  
\begin{figure}[H]
	\centering
	\includegraphics[width=0.19\textwidth]{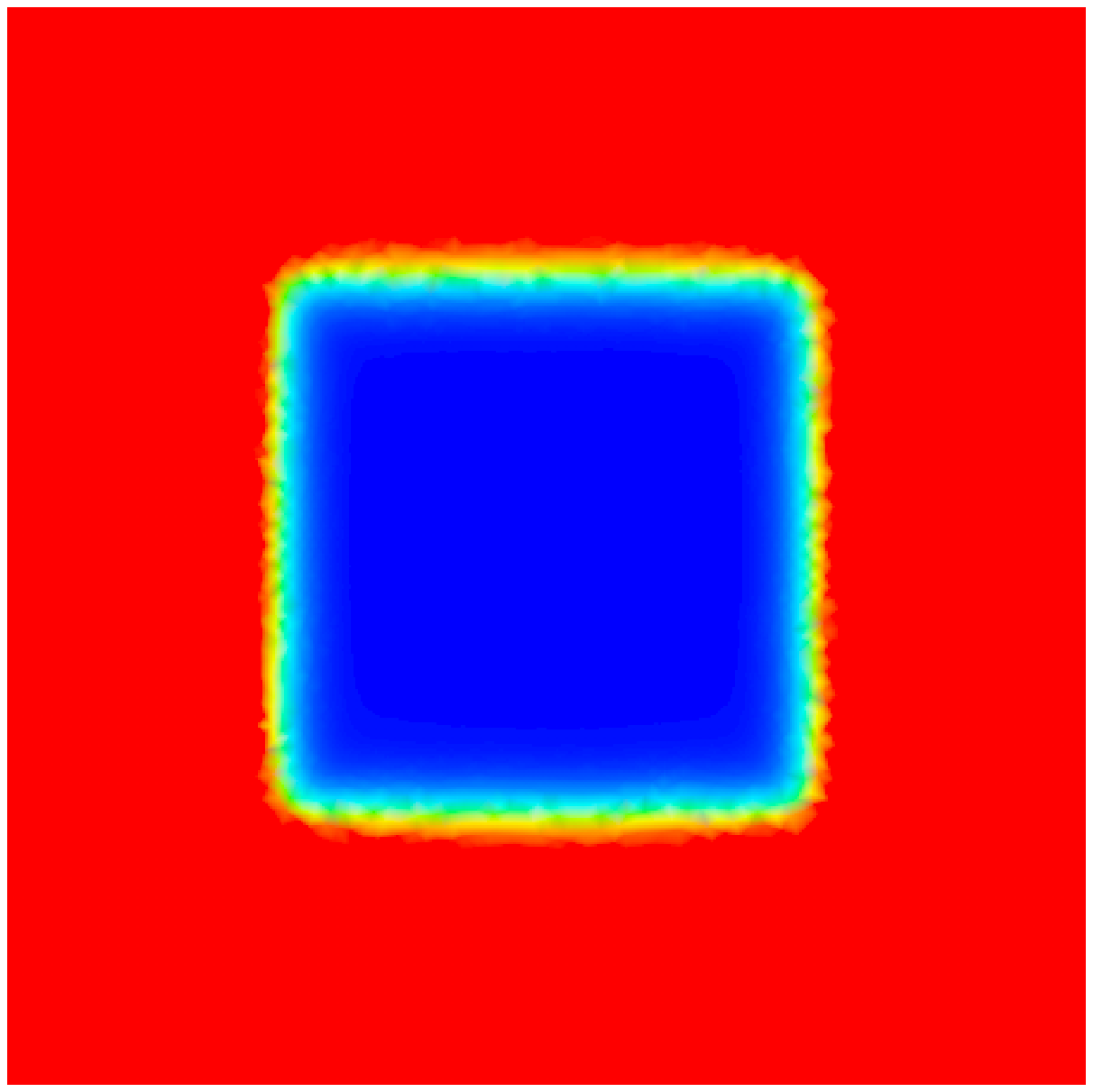}   \hspace{2.5cm}
	\includegraphics[width=0.26\textwidth]{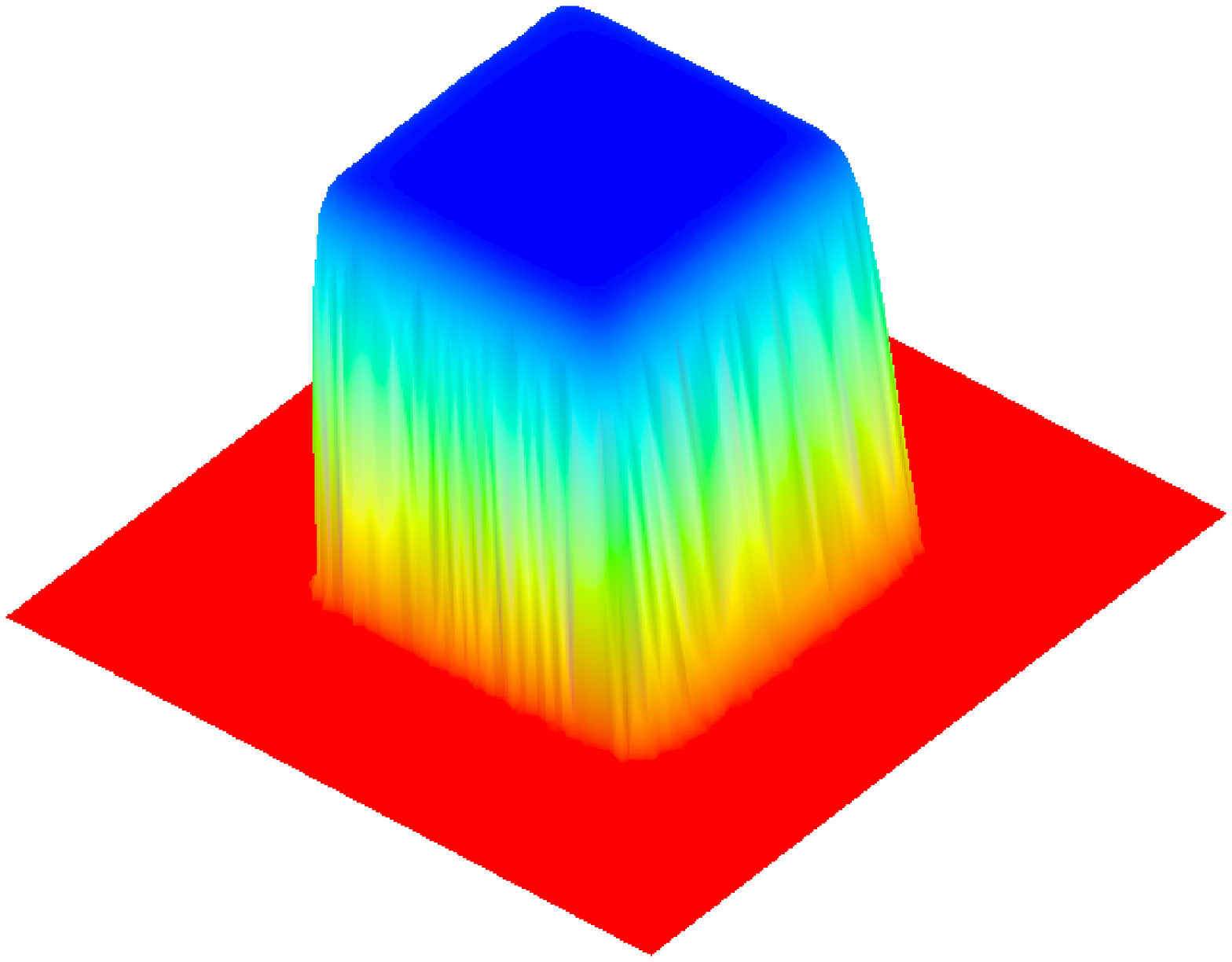}
	\caption{Concept of shape optimization: the domain (left) and the phase-field function (right) in 2D}
	\label{fig:design-concept}
\end{figure} 

\subsubsection{The objective functional}
Assume that $D$ is a designed domain that contains the optimized shape domain $\Omega$. 
To ensure that the velocity vanishes on the inner boundary in the fluid region $D$, an additional term is required to add into the objective functional to penalize the velocity field in the void domain, which is expressed as (see, for instant, \cite{Garcke15Numerical,ChenAn,Borrvall03Topology,Zhang15Topology,Abraham05Shape,Li19Shape,Yan17Shape}). 
\begin{equation} \label{eq:J2u}
J_{\alpha}(\phi,\mathbf{u}) = \int_D \frac{1}{2}  \left|\nabla \mathbf{u}\right|^2 d \mathbf{x} + \int_D \frac{1}{2}\alpha(\phi)\left|\mathbf{u}\right|^2 d \mathbf{x},
\end{equation}
where $\alpha(\phi)=\alpha_0(1-\phi)^2$ is a penalized function in this paper, which projects the total domain $D$ to the local domain $(D\ \backslash \Omega) \cup \xi $. 

On the other hand, the mixing energy, to penalize the generated diffuse interface, $J_{\epsilon}(\phi)$ replacing the length of perimeter is added to the objective function \cite{Wallin12Optimal,Garcke15Numerical,Shen10Numerical}, such as,
\begin{equation} \label{eq:J3u}
J_{\epsilon}(\phi) =  \int_D \frac{\epsilon}{2} \left|\nabla \phi\right|^2 + \frac{1}{ \epsilon} F(\phi) d \mathbf{x},
\end{equation}
where $F(\phi):=\frac{1}{4} \phi^2 (\phi-1)^2$ and $F^{\prime}(\phi):=f(\phi)=\phi(1-\phi)(1/2 -\phi)$. $\epsilon$ is the width that scales with the thickness of the diffuse interface. Apparently, $J_{\epsilon}(\phi)$ is only contributed by the interface which is proportional to the length of the interface.
%It is directly to balance the following two costs carefully. The first term defines the cost of creating surfaces, and the second term describes the cost of intermediate material densities.   

An additional penalization term $\alpha(\phi)\mathbf{u}$ is added into the Stokes equation \cite{Garcke15Numerical}, which interpolates between the Stokes flows $\left(\alpha(\phi)=0\right)$ in $\left\{ \mathbf{x}\in D \ | \ \phi(\mathbf{x})=1\right\}$ and some Darcy flows ($\alpha(\phi)=+\infty$) through porous medium with permeability in void domain with $\phi=0$. The penalized state equation is presented directly in the following formulations:
\begin{equation} \label{eq:state-equation-phi}
\left\{
\begin{aligned}
& - \Delta \mathbf{u} + \nabla p + \alpha(\phi) \mathbf{u} = 0, \quad  & \mathbf{x} \in D,\\
& \nabla \cdot \mathbf{u} = 0,\quad \ \   & \mathbf{x} \in D.
\end{aligned}
\right.
\end{equation}
\begin{figure}[H]
	\centering
	\includegraphics[width=0.6\textwidth]{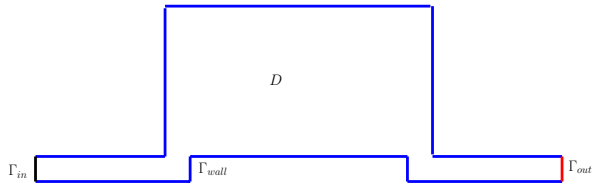} 
	\caption{The domain $D$ for shape optimization based on the phase-field approach with different boundaries: the inflow boundary $\left(\Gamma_{in}\right)$, the wall boundary $\left(\Gamma_{wall}\right)$, and the outflow boundary $\left(\Gamma_{out}\right)$.}
	\label{fig:domain-pf}
\end{figure} 

The boundary $\partial D$ consists of three parts (e.g. Fig.\ref{fig:domain-pf}): $\partial D=\Gamma_{in} \cup \Gamma_{wall} \cup \Gamma_{out}$. The inflow boundary $\Gamma_{in}$ and the outflow boundary are same to the definition of the original model, while the wall boundary satisfies $\Gamma_{wall}= \partial D \backslash (\Gamma_{in} \cup \Gamma_{out})$. 
Same boundary conditions are offered as the original model. An optimized shape is represented implicitly by the phase-field function (e.g. $\phi=0.5$.)

The total amount of fluid that is avilable for the design is given by $\beta V_0$. Consequently, the phase-field function is subject to the constraint
\begin{equation} \label{eq:volumv-constrain}
\int_D \phi d \mathbf{x}= \beta V_0,
\end{equation}
where $V_0$ is the volume of the total designed domain $D$ and $\beta$ is a given positive constant in $[0,1]$. 
 
By introducing a Lagrangian form with respect to the constraint of a given volume, the phase-field model for shape optimization is ultimately given as: 
\begin{equation} \label{eq:min-obj}
\begin{aligned}
& \min_{\phi \in [0,1]}{L(\phi,\mathbf{u},\lambda):= J_{\alpha}(\phi,\mathbf{u})+\eta J_{\epsilon}(\phi) +\lambda \left(\int_{D} \phi d\mathbf{x} - \beta V_0\right),} \\
&  \quad\quad\quad\quad\quad\quad\quad\quad\text{s.t.}  \quad (\ref{eq:state-equation-phi}),
\end{aligned}
\end{equation}
where $\lambda$ is a Lagrange multiplier for the volume constraint.

\subsubsection{The gradient flow approach}
To solve the phase-field model (\ref{eq:min-obj}), we adopt the gradient flow approach. The phase-field equation is derived via $L^2$ gradient flow as,
\begin{equation} \label{eq:phase-field-equation}
\left\{
\begin{aligned}
&\frac{d \phi}{d t} = - \frac{\delta L(\phi,\mathbf{u},\lambda)}{\delta \phi},  \ &  \mathbf{x} \in D, \ t >0,\\
& \phi(\mathbf{x},0) = \phi^0(\mathbf{x}),&   \mathbf{x} \in D,\\
& \frac{\partial \phi}{\partial \mathbf{n}} = 0,& \mathbf{x} \in \partial D,
\end{aligned}
\right.
\end{equation}
where $t$ is the pseudo time and $\phi^0(\mathbf{x}) \left(\in \{\phi | \phi(\mathbf{x})\in [0,1], \mathbf{x}\in D\}\right)$ is the initial guess. The derivation of the objective functional (\ref{eq:min-obj}) can be found in \cite{Takezawa10Shape,Li19A}, which is directly given as
$$  \label{eq:Jphi-derivation}
	\frac{\delta L(\phi, \mathbf{u}, \lambda)}{\delta \phi} = -\epsilon \eta \Delta \phi + \frac{\eta}{\epsilon} \phi (\phi-1)(\phi-\frac{1}{2}) -\alpha_0(1-\phi) \left|\mathbf{u}\right|^2 + \lambda.
$$

Analogously, let $\delta L(\phi,\mathbf{u},\lambda)/\delta \lambda = 0$, and we obtain the volume constrain (\ref{eq:volumv-constrain}). The evolution for the volume constrained condition can be written as
\begin{equation} \label{eq:lambda-equation}
\left\{
\begin{aligned}
&\frac{d \lambda}{dt} = - \frac{\delta L(\phi,\mathbf{u},\lambda)}{\delta \lambda}, \ t >0 \\
& \lambda(0) = \lambda^0,\\
\end{aligned}
\right.
\end{equation}
where 
$$\frac{\delta L(\phi,\mathbf{u},\lambda) }{\delta \lambda} =  \int_{D} \phi dx - \beta V_0$$
 and $\lambda^0$ is the initial guess and is often set to be 0.

It is indeed a complex coupled system that involves the phase-field equation (\ref{eq:phase-field-equation}), the evolution equation (\ref{eq:lambda-equation}) for the volume constrained condition, and the Stokes equation (\ref{eq:state-equation-phi}), which partly stimulates us to construct a decoupled algorithm to implement it in \cref{sec:ENS}. In the iterative procedure, it requires that the phase-field values are in $[0,1]$ for the decoupled schemes in each loop.

\section{Decoupled, energy stable numerical schemes}
\label{sec:ENS}
In this section, an effective decoupled scheme is proposed directly for the coupled system on solving the Stokes equation (\ref{eq:state-equation-scheme-1}), solving the phase-field equation (\ref{eq:semi-implicit-allen-cahn-equation-scheme-2}), using the cut-off postprocessing strategy for the phase-field function (\ref{eq:cut-off-phase-field-scheme-3}), and updating the Lagrangian parameter for the constrained volume condition (\ref{eq:lambda-scheme-4}) in an alternative way. 

Given an initial guess $\phi^0$ and $\lambda^0$, all variables in the system are updated one by one as follows,
$$
\left(\mathbf{u}^1, \phi^{1,*}, \phi^{1}, \lambda^1\right),\left(\mathbf{u}^2, \phi^{2,*}, \phi^{2}, \lambda^2\right), \cdots,\left(\mathbf{u}^n, \phi^{n,*}, \phi^{n}, \lambda^n\right), \cdots.
$$
All Specific details are present in the Algorithm \ref{alg:total-scheme-semi-discrete}.
\begin{algorithm}[H] 
	\caption{Phase-field model for shape optimization}
	\label{alg:total-scheme-semi-discrete}
	Given the parameters $N$, $K$, $\alpha_0$, $\epsilon$, $\eta$,  $\beta$, $\Delta t$, initialize $\phi^{0}$, $\lambda^{0}$, and  $n=0$.
	
	\textbf{while} $n < N$  \textbf{do}
	\begin{enumerate} 
		\item Update the velocity field $\mathbf{u}^{n+1}$ by the following state equation using the known phase-field function in the previous step,
		\begin{equation} \label{eq:state-equation-scheme-1}
			\left\{
			\begin{aligned}
				& - \Delta \mathbf{u}^{n+1} + \nabla p^{n+1} + \alpha(\phi^{n}) \mathbf{u}^{n+1} = 0, \quad   &\mathbf{x} \in D,\\
				& \nabla \cdot \mathbf{u}^{n+1} = 0,\quad \ \  & \mathbf{x} \in D.
			\end{aligned}
			\right.
		\end{equation}	
		\item Set $\phi^{n+1,0}=\phi^n$, $\lambda^{n+1,0}=\lambda^n$ and $k=0$. 
		
		\textbf{while} $k < K$  \textbf{do}
		\begin{enumerate}[(1)]
			\item Update the phase-field variable with the following stabilizing scheme and obtain $\phi^{n+1,k+1,*}$,
			\begin{equation} \label{eq:semi-implicit-allen-cahn-equation-scheme-2}
				\begin{aligned}
					&\frac{\phi^{n+1,k+1,*}}{\Delta t} - \epsilon \eta \Delta \phi^{n+1,k+1,*} + \frac{\alpha_0|\mathbf{u}^{n+1}|^2}{2} \phi^{n+1,k+1,*}  + \tilde{S} \phi^{n+1,k+1,*}  \\
					=& \frac{\phi^{n+1,k}}{\Delta t} - \frac{\eta}{\epsilon} f(\phi^{n+1,k}) - \alpha_0|\mathbf{u}^{n+1}|^2 \left(\frac{\phi^{n+1,k}}{2}-1\right)  -  \lambda^n   + \tilde{S} \phi^{n+1,k},
				\end{aligned}
			\end{equation}
			where $\tilde{S}$ is a stabilized parameter introduced in \cite{Shen10Numerical}.
			\item Restrict the phase-field function in $[0,1]$ by using a cut-off postprocessing strategy and obtain $ \phi^{n+1,k+1}$
			\begin{equation} \label{eq:cut-off-phase-field-scheme-3}
				\phi^{n+1,k+1}(\mathbf{x})=\mathcal{P}(\phi^{n+1,k+1,*}(\mathbf{x})):=\min(\max(\phi^{n+1,k+1,*}(\mathbf{x}),0),1),
			\end{equation}
			where the sign $\mathcal{P}$ denotes a cut-off operator.
			\item Update the Lagrange multiplier $\lambda^{n+1,k+1}$ by 
			\begin{equation} \label{eq:lambda-scheme-4}
				\lambda^{n+1,k+1}= \lambda^{n+1,k}-
				\left\{
				\begin{aligned}
					&
					\beta_0 J_v(\phi^{n+1,k+1}) ,\quad  
					J_v(\phi^{n+1,k+1,*}) \geq J_v(\phi^{n+1}) , \\
					&  \beta_0^* J_v(\phi^{n+1,k+1}),    
					\quad J_v(\phi^{n+1,k+1,*}) < J_v(\phi^{n+1,k+1}) ,
				\end{aligned}
				\right.
			\end{equation} 
			where $\beta_0$ denotes a fixed proper time step, $ \beta_0^*$ is a adjusted variable time step as $ \lambda^{n+1,k} (\int_D \phi^{n+1,k+1} - \phi^{n+1,k+1,*} dx) /( J_v(\phi^{n+1,k+1}))^2$, and $J_v(\phi)$ is defined by $ \int_D \phi d\mathbf{x} - \beta V_0$.
		\end{enumerate}
		\textbf{end while}
	\end{enumerate}
	Set $\phi^{n+1}=\phi^{n+1,K}$, $\lambda^{n+1}=\lambda^{n+1,K}$, and go to Step 1.
	
	\textbf{end while}
\end{algorithm}

\begin{remark}
	It is noted that Algorithm \ref{alg:total-scheme-semi-discrete} updates the phase field variable and the Lagrange multiplier $K$ times in the inner iteration in each loop. This inner iteration can enhance the overall efficiency significantly.
\end{remark}

In the following part, we will mainly illustrate the monotonic-decaying property step by step. We just prove the case when $K=1$ and omit the index $k$ for the inner iteration for simplicity.
$$
\begin{aligned}
 L\left(\phi^{n+1}, \mathbf{u}^{n+1},\lambda^{n+1}\right)  
&\xrightarrow[\textbf{LEMMA \ref{lem:3}}]{\mathbf{\leq}}
L\left(\phi^{n+1,*},\mathbf{u}^{n+1},\lambda^{n}\right)  \\
&\xrightarrow[\textbf{LEMMA \ref{lem:2}}]{\mathbf{\leq}}
L\left(\phi^{n},\mathbf{u}^{n+1},\lambda^{n}\right) \\
&\xrightarrow[\textbf{LEMMA \ref{lem:1}}]{\mathbf{\leq}}
L\left(\phi^{n},\mathbf{u}^{n},\lambda^{n}\right),
\end{aligned}
$$
for $n=0,1,2,\cdots$.

\subsection{Updating the state variable}
We now update the velocity field via solving a Stokes equation based on the fact that the known phase field is given. It clearly illustrates that the objective functional holds a monotonic-decaying property with respect to the velocity field in LEMMA \ref{lem:1}.
\begin{lemma} \label{lem:1}
	The phase field $\phi^{n}$ and the Lagrangian parameter $\lambda^{n}$ are given, by soving the state equation $(\ref{eq:state-equation-scheme-1})$, we have
	\begin{equation} \label{eq:state-decay}
		L(\phi^{n},\mathbf{u}^{n+1},\lambda^{n}) \leq L(\phi^{n},\mathbf{u}^{n},\lambda^{n}), \quad n=1,2,\cdots.
	\end{equation}
\end{lemma}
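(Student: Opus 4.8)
The plan is to first reduce the claimed inequality to a statement about the velocity-dependent part of $L$ alone. Since $\phi^{n}$ and $\lambda^{n}$ are held fixed on both sides of $(\ref{eq:state-decay})$, the Ginzburg--Landau term $\eta J_{\epsilon}(\phi^{n})$ and the volume term $\lambda^{n}\left(\int_{D}\phi^{n}\,d\mathbf{x}-\beta V_0\right)$ are identical on the two sides and cancel. Hence $(\ref{eq:state-decay})$ is equivalent to $J_{\alpha}(\phi^{n},\mathbf{u}^{n+1})\le J_{\alpha}(\phi^{n},\mathbf{u}^{n})$, so I only need to compare the dissipation-plus-penalization energy $J_{\alpha}(\phi^{n},\cdot)=\int_{D}\frac{1}{2}|\nabla\mathbf{u}|^{2}+\frac{1}{2}\alpha(\phi^{n})|\mathbf{u}|^{2}\,d\mathbf{x}$ at the two velocity fields.

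The key observation is that $(\ref{eq:state-equation-scheme-1})$ is precisely the first-order optimality system for minimizing $J_{\alpha}(\phi^{n},\cdot)$ over divergence-free fields carrying the prescribed boundary data, and that this functional is convex in $\mathbf{u}$ because $\alpha(\phi^{n})=\alpha_{0}(1-\phi^{n})^{2}\ge 0$. Rather than invoke abstract convexity, I would make the estimate quantitative. Setting $\mathbf{w}=\mathbf{u}^{n}-\mathbf{u}^{n+1}$ and using the elementary identity $\frac{1}{2}|a|^{2}-\frac{1}{2}|b|^{2}=\frac{1}{2}|a-b|^{2}+b\cdot(a-b)$ with $a=\nabla\mathbf{u}^{n}$, $b=\nabla\mathbf{u}^{n+1}$ (and likewise in the $\alpha$-term) yields
\[
J_{\alpha}(\phi^{n},\mathbf{u}^{n})-J_{\alpha}(\phi^{n},\mathbf{u}^{n+1})=\underbrace{\int_{D}\frac{1}{2}|\nabla\mathbf{w}|^{2}+\frac{1}{2}\alpha(\phi^{n})|\mathbf{w}|^{2}\,d\mathbf{x}}_{\ge 0}+\int_{D}\nabla\mathbf{u}^{n+1}:\nabla\mathbf{w}+\alpha(\phi^{n})\,\mathbf{u}^{n+1}\cdot\mathbf{w}\,d\mathbf{x}.
\]
The first integral is nonnegative since $\alpha(\phi^{n})\ge 0$, so the entire argument reduces to showing that the cross term vanishes.

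To kill the cross term I would test the weak form of $(\ref{eq:state-equation-scheme-1})$ with $\mathbf{w}$: multiplying the momentum balance by $\mathbf{w}$, integrating over $D$, and integrating by parts gives $\int_{D}\nabla\mathbf{u}^{n+1}:\nabla\mathbf{w}+\alpha(\phi^{n})\mathbf{u}^{n+1}\cdot\mathbf{w}\,d\mathbf{x}=\int_{D}p^{n+1}\,\nabla\cdot\mathbf{w}\,d\mathbf{x}+(\text{boundary terms})$. Here $\mathbf{w}$ is an admissible test function: since $\mathbf{u}^{n}$ and $\mathbf{u}^{n+1}$ both solve a Stokes system with the \emph{same} Dirichlet data $\mathbf{g}$ on $\Gamma_{in}$ and $\mathbf{0}$ on $\Gamma_{wall}$, their difference $\mathbf{w}$ vanishes on $\Gamma_{in}\cup\Gamma_{wall}$, and both being divergence-free forces $\nabla\cdot\mathbf{w}=0$, so the pressure term drops out. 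The remaining boundary integral on $\Gamma_{out}$ is annihilated by the natural (do-nothing) outflow condition of $(\ref{eq:stoke-eq})$ satisfied by $\mathbf{u}^{n+1}$. Thus the cross term is zero and $J_{\alpha}(\phi^{n},\mathbf{u}^{n})-J_{\alpha}(\phi^{n},\mathbf{u}^{n+1})\ge 0$, which is $(\ref{eq:state-decay})$.

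I expect the only real care to lie in the boundary bookkeeping: one must verify that $\mathbf{w}$ genuinely lies in the test space (equal Dirichlet data for $\mathbf{u}^{n}$ and $\mathbf{u}^{n+1}$ across iterations, which holds because only $\alpha$ changes with $n$), and that the traction contribution on $\Gamma_{out}$ is exactly cancelled, taking care to match the sign convention of the outflow condition to the one produced by the integration by parts. It is also worth stressing that $\mathbf{u}^{n}$ is divergence-free with the correct boundary data even though it was computed with $\alpha(\phi^{n-1})$ rather than $\alpha(\phi^{n})$; it is precisely this fact that makes $\mathbf{u}^{n}$ an admissible competitor for the minimization defining $\mathbf{u}^{n+1}$. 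Everything else is routine algebra.
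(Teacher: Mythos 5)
Your proposal is correct and follows essentially the same route as the paper's proof: test the state equation $(\ref{eq:state-equation-scheme-1})$ with $\mathbf{u}^{n}-\mathbf{u}^{n+1}$ and use the polarization identity $\frac{1}{2}(|a|^{2}-|b|^{2})=(a-b,b)+\frac{1}{2}|a-b|^{2}$ to write $L(\phi^{n},\mathbf{u}^{n},\lambda^{n})-L(\phi^{n},\mathbf{u}^{n+1},\lambda^{n})$ as a sum of nonnegative squares. Your extra bookkeeping of the pressure term and the boundary contributions (divergence-free test function, matching Dirichlet data, do-nothing outflow condition) fills in details the paper leaves implicit, but the argument is the same.
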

\begin{proof}
	First, $\mathbf{u}^{n+1}$ satisfies the following Stokes equation, 
	\begin{equation} \label{eq:update-state-proof-1}
	\left\{
	\begin{aligned}
	& - \Delta \mathbf{u}^{n+1} + \nabla p^{n+1} + \alpha(\phi^{n}) \mathbf{u}^{n+1} = 0, \quad   &\mathbf{x} \in D,\\
	& \nabla \cdot \mathbf{u}^{n+1} = 0,\quad \ \  & \mathbf{x} \in D.
	\end{aligned}
	\right.
	\end{equation}	
	Take $\mathbf{u}^{n}-\mathbf{u}^{n+1}$ as the test function in (\ref{eq:update-state-proof-1}), integrate over the domain $D$, and we obtain
	\begin{equation} \label{eq:update-state-proof-2}
	\begin{aligned}
	 (\nabla \mathbf{u}^{n+1},\nabla \mathbf{u}^{n}-\nabla\mathbf{u}^{n+1}) + (\alpha(\phi^{n}) \mathbf{u}^{n+1},\mathbf{u}^{n}-\mathbf{u}^{n+1}) = 0, \quad   &\mathbf{x} \in D.
	\end{aligned}
	\end{equation}
	On the other hand, combining (\ref{eq:update-state-proof-2}) and the known identity $\frac{1}{2}(|a|^2-|b|^2) = (a-b,b)+\frac{1}{2}|a-b|^2$, we have the estimation
	\begin{equation} \label{eq:update-state-proof-3}
	\begin{aligned}
	&L(\phi^{n},\mathbf{u}^{n},\lambda^{n}) - L(\phi^{n},\mathbf{u}^{n+1},\lambda^{n}) \\
	=&\frac{1}{2}(\|\nabla \mathbf{u}^n\|^2 -\|\nabla \mathbf{u}^{n+1}\|^2) + \frac{1}{2}   \left(\left\|\sqrt{\alpha(\phi^n)} \mathbf{u}^{n}\right\|^2-\left\| \sqrt{\alpha(\phi^n)}\mathbf{u}^{n+1}\right\|^2\right) \\
	=& (\nabla \mathbf{u}^n - \nabla \mathbf{u}^{n+1},\nabla \mathbf{u}^{n+1}) + \frac{1}{2}\|\nabla \mathbf{u}^n - \nabla \mathbf{u}^{n+1}\|^2  \\
	& +  \left( \alpha(\phi^n) \mathbf{u}^{n+1}, \mathbf{u}^{n}-\mathbf{u}^{n+1} \right) + \frac{1}{2}\alpha(\phi^n) \left\|\mathbf{u}^{n}-\mathbf{u}^{n+1}\right\|^2 \\
	= & \frac{1}{2}\|\nabla \mathbf{u}^n - \nabla \mathbf{u}^{n+1}\|^2 + \frac{1}{2}\alpha(\phi^n) \left\|\mathbf{u}^{n}-\mathbf{u}^{n+1}\right\|^2 \\
	\geq &0,
	\end{aligned}
	\end{equation}
	where $\|\cdot\|$ donates the $L^2$-norm in the domain $D$.
	 Therefore, 
	 it implies the result (\ref{eq:state-decay}).
\end{proof}

\subsection{Updating the phase-field variable}
For the sake of simplicity, we just consider the one-step iteration and prove the case when $K=1$ and omit the index $k$ for the inner iteration.
LEMMA \ref{lem:2} shows the monotonic-decaying property with respect to the phase-field variable when solving the phase-field equation.
\begin{lemma} \label{lem:2}
Under the condition $\tilde{S}\geq \frac{\eta}{4\epsilon}$, the scheme $(\ref{eq:semi-implicit-allen-cahn-equation-scheme-2})$ is unconditionally energy stable, moreover,
\begin{equation}
L(\phi^{n+1,*},\mathbf{u}^{n+1},\lambda^{n}) \leq L(\phi^{n},\mathbf{u}^{n+1},\lambda^{n}), \quad n=0,1,\cdots,N.
\end{equation}
\end{lemma}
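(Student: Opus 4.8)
The plan is to freeze $\mathbf{u}=\mathbf{u}^{n+1}$ and $\lambda=\lambda^{n}$ and to regard $L(\cdot,\mathbf{u}^{n+1},\lambda^{n})$ as a functional of the phase field alone, so that the $\tfrac12\|\nabla\mathbf{u}^{n+1}\|^2$ contribution and the constant $-\beta V_0$ drop out of any increment. Writing $\psi:=\phi^{n+1,*}$, $\phi:=\phi^{n}$ and $g:=\psi-\phi$, I would first recast the update $(\ref{eq:semi-implicit-allen-cahn-equation-scheme-2})$ (with $K=1$) by moving the implicit term $\tfrac{\alpha_0|\mathbf{u}^{n+1}|^2}{2}\psi$ to the right and combining it with $-\alpha_0|\mathbf{u}^{n+1}|^2(\tfrac{\phi}{2}-1)$, which produces the symmetric (Crank--Nicolson--type) average. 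The scheme then takes the compact form
\begin{equation*}
\frac{g}{\Delta t}+\tilde{S}\,g-\epsilon\eta\,\Delta\psi+\frac{\eta}{\epsilon}f(\phi)+\alpha_0|\mathbf{u}^{n+1}|^2\left(\frac{\psi+\phi}{2}-1\right)+\lambda^{n}=0.
\end{equation*}

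Next I would expand the increment $L(\psi,\mathbf{u}^{n+1},\lambda^{n})-L(\phi,\mathbf{u}^{n+1},\lambda^{n})$ as the sum of four differences coming from the Ginzburg--Landau gradient term, the double-well potential, the penalization $\tfrac{\alpha_0}{2}\alpha(\phi)|\mathbf{u}|^2$, and the volume (Lagrange) term, and then test the recast scheme against $g$. For the gradient part I would use the elementary identity $\tfrac12(|a|^2-|b|^2)=a\cdot(a-b)-\tfrac12|a-b|^2$ with $a=\nabla\psi$, $b=\nabla\phi$, and integrate by parts using the homogeneous Neumann condition $\partial_{\mathbf{n}}\phi=0$ to obtain
\begin{equation*}
\frac{\epsilon\eta}{2}\left(\|\nabla\psi\|^2-\|\nabla\phi\|^2\right)=\left(-\epsilon\eta\,\Delta\psi,\,g\right)-\frac{\epsilon\eta}{2}\|\nabla g\|^2.
\end{equation*}
Substituting $-\epsilon\eta\,\Delta\psi$ from the recast scheme converts $(-\epsilon\eta\,\Delta\psi,g)$ into $-(\tfrac1{\Delta t}+\tilde{S})\|g\|^2$ together with the explicit potential, penalization, and Lagrange contributions paired against $g$.

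The two central cancellations then appear. Using $(1-\psi)^2-(1-\phi)^2=2g(\tfrac{\psi+\phi}{2}-1)$, the penalization difference exactly matches, with opposite sign, the term $-\alpha_0\int_D|\mathbf{u}^{n+1}|^2(\tfrac{\psi+\phi}{2}-1)\,g$ produced by the scheme, so the whole penalization contribution cancels --- this is precisely what the symmetric treatment of $\alpha(\phi)|\mathbf{u}|^2$ was designed to achieve --- and likewise the two $\lambda^{n}\int_D g$ terms cancel. What survives is
\begin{equation*}
\begin{aligned}
&L(\psi,\mathbf{u}^{n+1},\lambda^{n})-L(\phi,\mathbf{u}^{n+1},\lambda^{n})\\
&\qquad=-\left(\frac{1}{\Delta t}+\tilde{S}\right)\|g\|^2-\frac{\epsilon\eta}{2}\|\nabla g\|^2+\frac{\eta}{\epsilon}\int_D\left[F(\psi)-F(\phi)-f(\phi)g\right]d\mathbf{x}.
\end{aligned}
\end{equation*}
I would then apply Taylor's theorem, $F(\psi)-F(\phi)-f(\phi)g=\tfrac12 f'(\xi)g^2$ for some $\xi$ between $\phi$ and $\psi$, and use $f'(\xi)=3(\xi-\tfrac12)^2-\tfrac14\le\tfrac12$, so that the potential remainder is $\le\tfrac{\eta}{4\epsilon}\|g\|^2$; this is absorbed by $\tilde{S}\|g\|^2$ exactly under the hypothesis $\tilde{S}\ge\tfrac{\eta}{4\epsilon}$, leaving $L(\psi,\mathbf{u}^{n+1},\lambda^{n})-L(\phi,\mathbf{u}^{n+1},\lambda^{n})\le-\tfrac1{\Delta t}\|g\|^2-\tfrac{\epsilon\eta}{2}\|\nabla g\|^2\le 0$, a bound that holds for any $\Delta t$, which is the unconditional stability.

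The delicate point --- and the step I expect to be the main obstacle --- is the bound $f'(\xi)\le\tfrac12$: it holds for $\xi\in[0,1]$ (where $\max f'=\tfrac12$, attained at the endpoints), but the intermediate value $\xi$ lies between $\phi=\phi^{n}\in[0,1]$ and the \emph{pre-cutoff} iterate $\psi=\phi^{n+1,*}$, which is not yet guaranteed to lie in $[0,1]$. If $\psi$ overshoots the physical interval, $f'(\xi)$ may exceed $\tfrac12$ and the clean threshold $\tfrac{\eta}{4\epsilon}$ is threatened. To make the estimate rigorous I would either invoke an a priori bound confining the iterate to $[0,1]$, or replace $F$ by the standard $C^2$ double-well truncated to grow quadratically outside $[0,1]$ so that $\sup_{\mathbb{R}}F''=\tfrac12$ is preserved; either route retains the stabilization constant $\tfrac{\eta}{4\epsilon}$. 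The remaining ingredients --- the quadratic identity, the integration by parts, and the two algebraic cancellations --- are routine once the scheme is written in the symmetric form above.
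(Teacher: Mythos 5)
Your proof is correct and follows essentially the same route as the paper: test the scheme against $\phi^{n+1,*}-\phi^{n}$, use the quadratic identity for the gradient term, exploit the implicit--explicit splitting of $\alpha(\phi)|\mathbf{u}|^{2}$ so that the penalization contribution cancels exactly, and absorb the Taylor remainder of the double-well via $\tilde{S}\ge\frac{\eta}{4\epsilon}$. The ``delicate point'' you flag is resolved in the paper by precisely your second remedy: $F$ is replaced by the standard quadratic truncation outside $[0,1]$, so that $|f'|\le\frac{1}{2}$ holds on all of $\mathbb{R}$ and the pre-cutoff iterate causes no difficulty.
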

\begin{proof}
Take $\phi^{n+1,*} - \phi^n$ as the test function, integrate over $D$ in the scheme $(\ref{eq:semi-implicit-allen-cahn-equation-scheme-2})$, and we obtain
\begin{equation} \label{eq:phase-field-proof-pf}
\begin{aligned}
& \frac{1}{\Delta t}(\phi^{n+1,*},\phi^{n+1,*}-\phi^{n}) -  \epsilon \eta (\Delta \phi^{n+1,*}, \phi^{n+1,*}-\phi^{n}) +  (\lambda^n, \phi^{n+1,*}-\phi^{n}) \\
& +  \frac{ \alpha_0 }{2}( |\mathbf{u^{n+1}}|^2 \phi^{n+1,*},\phi^{n+1,*}-\phi^{n})  
+  \tilde{S}(\phi^{n+1,*},\phi^{n+1,*}-\phi^{n}) \\
=& \frac{1}{\Delta t}(\phi^{n},\phi^{n+1,*}-\phi^{n}) -   \frac{\eta} {\epsilon}(f(\phi^n),\phi^{n+1,*}-\phi^{n}) \\
& -  \frac{ \alpha_0 }{2}\left( |\mathbf{u}^{n+1}|^2 (\phi^{n}-2),\phi^{n+1,*}-\phi^{n}\right)  
+  \tilde{S}(\phi^{n},\phi^{n+1,*}-\phi^{n}).
\end{aligned}
\end{equation}
On the other hand, under the condition (\ref{eq:phase-field-proof-pf}), we merely need to prove an equivalent inequality that $L(\phi^{n+1,*},\mathbf{u}^{n+1},\lambda^{n}) - L(\phi^{n},\mathbf{u}^{n+1},\lambda^{n}) \leq 0$.

We first consider the penalization energy, and have
\begin{equation} \label{eq:j2-constrian}
\begin{aligned}
& \int_D\frac{1}{2} \alpha_0 (1-\phi^{n+1,*})^2 |\mathbf{u}^{n+1}|^2 d \mathbf{x} -\int_D \frac{1}{2}\alpha_0 (1-\phi^{n})^2 |\mathbf{u}^{n+1}|^2 d \mathbf{x} \\
=&  \frac{\alpha_0}{2} \left( |\mathbf{u}^{n+1}|^2 \phi^{n+1,*},\phi^{n+1,*}-\phi^{n} \right) +  \frac{\alpha_0}{2} \left( |\mathbf{u}^{n+1}|^2(\phi^{n}-2),\phi^{n+1,*}-\phi^{n} \right)
\end{aligned}
\end{equation}
Next using the identity $\frac{1}{2}(|a|^2-|b|^2)=(a-b,a)-\frac{1}{2}|a-b|^2$ and partial integral formula, we obtain the following estimation
\begin{equation} \label{eq:laplace-constrian-2}
\begin{aligned}
&\int_D \frac{1}{2}\left|\nabla \phi^{n+1,*} \right|^2d \mathbf{x} - \int_D \frac{1}{2} \left|\nabla \phi^{n} \right|^2d \mathbf{x} = \frac{1}{2} \|\nabla \phi^{n+1,*} \|^2_{L^2(D)} -  \frac{1}{2} \|\nabla \phi^{n} \|^2_{L^2(D)} \\
=& (-\Delta \phi^{n+1,*}, \phi^{n+1,*}-\phi^{n}) - \frac{1}{2} \|\nabla \phi^{n+1,*} - \nabla \phi^n\|^2 
\end{aligned}
\end{equation}

For the double-well function, a truncated technique is taken directly in the analysis of the Allen-Cahn-type equation,
$$
F(\phi)=
\left\{
\begin{aligned}
& \frac{1}{4}(\phi-1)^2, \quad \phi > 1,  \\
& \frac{1}{4}\phi^2(\phi-1)^2, \quad  0 \leq \phi \leq 1, \\
& \frac{1}{4}\phi^2, \quad \phi<0,
\end{aligned}
\right.
$$
where $f^{\prime}(\phi)$ satisfies the following condition
$
\max_{\mathbf{x}\in \mathcal{R}^d} |f^{\prime}(\phi)| \leq \frac{1}{2}.
$
Combining the Taylor expansion of double-well function and the truncated technique, integrating over $D$, we have the following inequality  
\begin{equation}
\label{eq:volume-Taylor-expansion}
\left(F(\phi^{n+1,*}) - F(\phi^n),1\right) \leq (f(\phi^n), \phi^{n+1,*}-\phi^n) + \frac{1}{4}\|\phi^{n+1,*}- \phi^n\|^2.
\end{equation}

Define that $J_v(\phi):= \int_D \phi d\mathbf{x} - \beta V_0$. For the volume energy, we have the following estimation,
\begin{equation} \label{eq:jv-constrian}
\begin{aligned}
 J_v(\phi^{n+1,*})-J_v(\phi^n) 
= \int_D \phi^{n+1,*}-\phi^{n} \ dx 
=(1, \phi^{n+1,*}-\phi^{n}).
\end{aligned}
\end{equation}

Combining (\ref{eq:phase-field-proof-pf}), (\ref{eq:j2-constrian}), (\ref{eq:laplace-constrian-2}), (\ref{eq:volume-Taylor-expansion}) and (\ref{eq:jv-constrian}), we obtain
\begin{equation}
\begin{aligned}
 & L\left(\phi^{n+1,*},\mathbf{u}^{n+1},\lambda^{n}\right)-L\left(\phi^{n},\mathbf{u}^{n+1},\lambda^{n}\right) \\
= & \frac{\alpha_0 }{2}  \left(|\mathbf{u}^{n+1}|^2 \phi^{n+1,*},\phi^{n+1,*}-\phi^{n}\right) +  \frac{\alpha_0}{2}  \left(|\mathbf{u}^{n+1}|^2 (\phi^{n}-2),\phi^{n+1,*}-\phi^{n}\right) \\
& + \frac{\eta \epsilon}{2} \left(\|\nabla \phi^{n+1,*}\|^2 -\|\nabla \phi^{n}\|^2\right) + \frac{\eta}{\epsilon}  \left( F(\phi^{n+1,*}) - F(\phi^n),1  \right)  + \lambda^{n}(1, \phi^{n+1,*}-\phi^n) \\
\leq & \left(-\frac{1}{\Delta t} -\tilde{S} +  \frac{\eta }{4 \epsilon} \right)\|\phi^{n+1,*}-\phi^n\|^2 - \frac{\eta \epsilon}{2}\|\nabla \phi^{n+1,*}-\nabla \phi^n\|^2,  
\end{aligned}
\end{equation}
which implies the proof if $\tilde{S}\geq \frac{\eta}{4\epsilon}$.
\end{proof}

\subsection{Revising the phase-field function and updating the Lagrangian parameter}
To make sure that $\phi^{n+1}\in [0,1]$ $(n=0,1,2,\cdots)$ after the calculation of phase-field function in a loop, a natural idea is to apply a cut-off postprocessing strategy for the phase-field function directly, such as (\ref{eq:cut-off-phase-field-scheme-3}). However, it comes to the volume term, a cut-off postprocessing may lead to an increase in the volume constraint.  Variable time step is adopted to erase the increment in this scheme. Then
the energy holds the monotonic-decaying property in LEMMA \ref{lem:3}.

\begin{lemma} \label{lem:3}
After updating the phase-field variable $\phi^{n+1}$ by a cut-off postprocessing strategy $(\ref{eq:cut-off-phase-field-scheme-3})$ and assuming that $ \int_D \phi^{n+1} d\mathbf{x} - \beta V_0\neq 0$, we have the discrete energy law under a proper variable time step $(\ref{eq:lambda-scheme-4})$ for the update of the Lagrangian parameter,
\begin{equation}
L\left(\phi^{n+1},\mathbf{u}^{n+1},\lambda^{n+1}\right) \leq L\left(\phi^{n+1,*},\mathbf{u}^{n+1},\lambda^{n}\right).
\end{equation}
\end{lemma}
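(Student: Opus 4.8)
The plan is to write the one-step difference as
\[
L(\phi^{n+1},\mathbf{u}^{n+1},\lambda^{n+1}) - L(\phi^{n+1,*},\mathbf{u}^{n+1},\lambda^{n}) = \mathcal{E} + \mathcal{V},
\]
where $\mathcal{E} := \bigl[J_{\alpha}(\phi^{n+1},\mathbf{u}^{n+1}) + \eta J_{\epsilon}(\phi^{n+1})\bigr] - \bigl[J_{\alpha}(\phi^{n+1,*},\mathbf{u}^{n+1}) + \eta J_{\epsilon}(\phi^{n+1,*})\bigr]$ collects every term not involving $\lambda$, and $\mathcal{V} := \lambda^{n+1}J_v(\phi^{n+1}) - \lambda^{n}J_v(\phi^{n+1,*})$ is the volume--Lagrangian contribution. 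Since $\mathbf{u}^{n+1}$ is frozen throughout this step, I would prove $\mathcal{E}\le 0$ and $\mathcal{V}\le 0$ separately and add them.

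The first step is to establish $\mathcal{E}\le 0$ by exploiting that $\mathcal{P}$ acts pointwise and projects onto $[0,1]$. I would verify that each $\phi$-dependent integrand is pointwise non-increasing under $\mathcal{P}$. For the penalization density, $\alpha(\mathcal{P}(\phi)) = \alpha_0(1-\mathcal{P}(\phi))^2 \le \alpha_0(1-\phi)^2 = \alpha(\phi)$ because $\mathcal{P}(\phi)$ is never farther from $1$ than $\phi$ is. For the truncated potential, $F(\mathcal{P}(\phi))\le F(\phi)$ reads off directly from the three-branch definition of $F$, since clipping a value outside $[0,1]$ to $0$ or $1$ lands exactly at a zero of $F$. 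For the gradient term, the weak derivative satisfies $\nabla\mathcal{P}(\phi) = \mathbf{1}_{\{0<\phi<1\}}\nabla\phi$ a.e., so $|\nabla\mathcal{P}(\phi)|\le|\nabla\phi|$ a.e. Integrating these three pointwise inequalities over $D$ gives $\mathcal{E}\le 0$; the Dirichlet term $\tfrac12|\nabla\mathbf{u}^{n+1}|^2$ cancels because it does not depend on $\phi$.

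It then remains to bound $\mathcal{V}$ through the branch structure of the update $(\ref{eq:lambda-scheme-4})$. Writing $a := J_v(\phi^{n+1,*})$ and $b := J_v(\phi^{n+1})$, and noting $b - a = \int_D(\phi^{n+1}-\phi^{n+1,*})\,d\mathbf{x}$, I would substitute each branch into $\mathcal{V} = \lambda^{n+1}b - \lambda^n a$. In the variable-step branch ($a<b$), the definition $\beta_0^* = \lambda^n(b-a)/b^2$ is engineered so that $\beta_0^* b^2 = \lambda^n(b-a)$; inserting $\lambda^{n+1} = \lambda^n - \beta_0^* b$ collapses $\mathcal{V}$ to exactly $0$. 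This exact cancellation is the crux of the construction, and it is why the hypothesis $\int_D\phi^{n+1}\,d\mathbf{x}\ne\beta V_0$, i.e.\ $b\ne 0$, is needed for $\beta_0^*$ to be well defined. Combined with $\mathcal{E}\le 0$, this settles the variable-step case. In the fixed-step branch ($a\ge b$), inserting $\lambda^{n+1}=\lambda^n-\beta_0 b$ yields $\mathcal{V} = \lambda^n(b-a) - \beta_0 b^2$ with $b-a\le 0$.

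I expect the fixed-step branch to be the main obstacle. The term $-\beta_0 b^2\le 0$ is harmless, so the sign of $\mathcal{V}$ is governed by $\lambda^n(b-a)$; since $b-a\le 0$ here, this is $\le 0$ precisely when $\lambda^n\ge 0$. The delicate point is therefore to verify that the iteration keeps $\lambda^n$ of the correct sign on this branch (or, failing that, to take $\beta_0$ large enough that $\beta_0 b^2$ dominates $\lambda^n(b-a)$ when the latter is positive, absorbing any residual positive part into the strictly negative surplus $-\mathcal{E}$ supplied by the cut-off). Once $\mathcal{V}\le 0$ is secured in both branches, adding $\mathcal{E}\le 0$ delivers the claimed energy inequality.
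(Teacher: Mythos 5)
Your decomposition into $\mathcal{E}$ (the $\lambda$-free part) and $\mathcal{V}$ (the Lagrangian part) is exactly the structure of the paper's proof. The three pointwise monotonicity facts you use for $\mathcal{E}\le 0$ --- the cut-off never moves $\phi$ farther from $1$, it sends out-of-range values to zeros of the (truncated) potential, and $\nabla\mathcal{P}(\phi)$ vanishes where the clipping acts --- are the same estimates the paper establishes (it phrases the gradient step as a decomposition $D=D_1\cup D_2$ rather than via the weak chain rule, but the content is identical; Lemma \ref{lem:4} later redoes this at the discrete level precisely because the pointwise identity fails for finite element interpolants). Your treatment of the variable-step branch also coincides with the paper's: its chain of equivalences is applied at equality, so $\mathcal{V}=0$ there, and the hypothesis $b=J_v(\phi^{n+1})\ne 0$ is needed only so that $\beta_0^*$ is well defined.

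The one point you leave open --- the sign of $\lambda^n(b-a)$ in the fixed-step branch --- is a genuine gap, and you should be aware that the paper's proof does not close it either: it merely states the fixed-step update and then analyses only the case $a<b$. The concern is real rather than cosmetic. Since $\lambda^0=0$, one iteration gives $\lambda^1=-\beta_0 J_v(\phi^1)$, which is negative whenever the volume overshoots, so no sign of $\lambda^n$ is guaranteed along the iteration; and for a fixed $\beta_0$ there is no reason that $\beta_0 b^2$ dominates $|\lambda^n|(a-b)$ when $\lambda^n<0$ and $a>b$ strictly. (The only automatic case is $a=b$, where $\mathcal{V}=-\beta_0 b^2\le 0$, which covers the situation where the cut-off is inactive.) A clean repair is the one you hint at: switch to the adjusted step $\beta_0^*=\lambda^n(b-a)/b^2$ whenever $\lambda^n(b-a)>0$, irrespective of which of $a$, $b$ is larger; then $\mathcal{V}\le 0$ in all cases and the lemma holds as stated. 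As written, however, neither your argument nor the paper's establishes the inequality on the fixed-step branch without an additional hypothesis such as $\lambda^n\ge 0$.
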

\begin{proof}
By a cut-off postprocessing strategy (\ref{eq:cut-off-phase-field-scheme-3}) for the phase-field function, we demonstrate that the objective functional is nonincreasing. 

Let $D=D_1\cup D_2$ and $D_1 \cap D_2 = \emptyset$, where $D_1$ donates the domain that the phase-field values over $[0,1]$,  then we obtain 
$$
\nabla \phi^{n+1} = \mathbf{0} \quad \mathbf{x} \in D_1, \quad  \nabla \phi^{n+1} = \nabla \phi^{n+1,*} \quad \mathbf{x} \in D_2.
$$
Therefore, we have the following estimation,
$$
\begin{aligned}
\left\|\nabla \phi^{n+1} \right\|_{L^2(D)}^2 &=  \left\|\nabla \phi^{n+1} \right\|_{L^2(D_2)}^2  \\
&\leq \left\|\nabla \phi^{n+1,*} \right\|_{L^2(D_1)}^2 + \left\|\nabla \phi^{n+1} \right\|_{L^2(D_2)}^2  \\
&= \left\|\nabla \phi^{n+1,*} \right\|_{L^2(D_1)}^2 + \left\|\nabla \phi^{n+1,*} \right\|_{L^2(D_2)}^2  \\
& = \left\|\nabla \phi^{n+1,*}\right\|_{L^2(D)}^2,
\end{aligned}
$$
which also illustrate that the it doesn't make a contribution for the gradient operator in the domain $D_1$ due to a cut-off postprocessing. Then we have
\begin{equation} \label{eq:proof-12}
\int_D \frac{\epsilon}{2} \left|\nabla \phi^{n+1}\right|^2  d \mathbf{x} \leq \int_D \frac{\epsilon}{2} \left|\nabla \phi^{n+1,*}\right|^2  d \mathbf{x}.
\end{equation}

In the objective functional of the phase-field model (\ref{eq:min-obj}), for the penalized term and the double-well function, we directly yields
\begin{equation} \label{eq:proof-11}
\begin{aligned}
 \int_D \frac{1}{2}(1-\phi^{n+1})^2 |\mathbf{u}^{n+1}|^2 dx 
\leq  \int_D \frac{1}{2}(1-\phi^{n+1,*})^2 |\mathbf{u}^{n+1}|^2 dx,
\end{aligned}
\end{equation}
and
\begin{equation}
\int_D (\phi^{n+1})^2(\phi^{n+1}-1)^2dx \leq \int_D(\phi^{n+1,*})^2(\phi^{n+1,*}-1)^2dx.
\end{equation}

For the volume term, we first define that $J_v(\phi):= \int_D \phi d\mathbf{x} - \beta V_0$ and update $\lambda^{n+1}$ by $\lambda^{n+1}= \lambda^{n}-\beta_0 J_v(\phi^{n+1})$ if $J_v(\phi^{n+1,*}) \geq J_v(\phi^{n+1})$.
When $J_v(\phi^{n+1,*}) < J_v(\phi^{n+1})$, the increment requires erasing via adjusting the parameter $\beta_0^*$ such that
\begin{equation}
\begin{aligned}
& \lambda^{n+1} J_v(\phi^{n+1}) \leq  \lambda^{n} J_v(\phi^{n+1,*}) \\
\Longleftrightarrow & \left(\lambda^{n}-\beta_0^* J_v(\phi^{n+1}) \right) J_v(\phi^{n+1}) \leq  \lambda^{n} J_v(\phi^{n+1,*}) \\
\Longleftrightarrow & \lambda^{n} \left(  \int_D \phi^{n+1} - \phi^{n+1,*} dx \right) \leq \beta_0^* \left(J_v(\phi^{n+1}) \right)^2 \\
\Longleftrightarrow & \beta_0^*  \geq  \lambda^{n} \frac{ \int_D \phi^{n+1} - \phi^{n+1,*} dx }{\left(J_v(\phi^{n+1})\right)^2},    \\
\end{aligned}
\end{equation}
 which makes sure that $J_v(\phi^{n+1,*}) \geq J_v(\phi^{n+1})$.
 
 Therefore, we complete the proof.
\end{proof}

\begin{theorem}
When using the numerical Algorithm \ref{alg:total-scheme-semi-discrete} for the objective problem $(\ref{eq:min-obj})$, the total energy holds the monotonic-decaying property,
\begin{equation}
L\left(\phi^{n+1},\mathbf{u}^{n+1},\lambda^{n+1}\right) \leq L\left(\phi^{n},\mathbf{u}^{n},\lambda^{n}\right).
\end{equation}
\end{theorem}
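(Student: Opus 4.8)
The plan is to prove the theorem purely by transitivity, assembling the three intermediate inequalities already established in LEMMA \ref{lem:1}, LEMMA \ref{lem:2}, and LEMMA \ref{lem:3}. Each lemma controls exactly one of the three update steps in a single outer loop of Algorithm \ref{alg:total-scheme-semi-discrete}, and the three controlled quantities share the common intermediate states $L(\phi^{n},\mathbf{u}^{n+1},\lambda^{n})$ and $L(\phi^{n+1,*},\mathbf{u}^{n+1},\lambda^{n})$, so they chain together without any additional estimation.

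Concretely, I would first invoke LEMMA \ref{lem:1}, which---holding $\phi^n$ and $\lambda^n$ fixed and solving the Stokes system (\ref{eq:state-equation-scheme-1}) for $\mathbf{u}^{n+1}$---gives $L(\phi^{n},\mathbf{u}^{n+1},\lambda^{n}) \leq L(\phi^{n},\mathbf{u}^{n},\lambda^{n})$. Next I would apply LEMMA \ref{lem:2} (valid under $\tilde{S} \geq \frac{\eta}{4\epsilon}$), which governs the Allen--Cahn update (\ref{eq:semi-implicit-allen-cahn-equation-scheme-2}) and yields $L(\phi^{n+1,*},\mathbf{u}^{n+1},\lambda^{n}) \leq L(\phi^{n},\mathbf{u}^{n+1},\lambda^{n})$. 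Finally LEMMA \ref{lem:3} handles the cut-off postprocessing (\ref{eq:cut-off-phase-field-scheme-3}) together with the Lagrange update (\ref{eq:lambda-scheme-4}) and gives $L(\phi^{n+1},\mathbf{u}^{n+1},\lambda^{n+1}) \leq L(\phi^{n+1,*},\mathbf{u}^{n+1},\lambda^{n})$. Stringing these three inequalities in the order displayed in the schematic at the beginning of \cref{sec:ENS} produces the claimed bound $L(\phi^{n+1},\mathbf{u}^{n+1},\lambda^{n+1}) \leq L(\phi^{n},\mathbf{u}^{n},\lambda^{n})$.

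Since the three lemmas do all of the analytic work, the theorem itself carries no genuine difficulty for the single inner-iteration case $K=1$ that is being proved here; the only points requiring care are bookkeeping ones. I would make sure the hypotheses of each lemma are actually met at the relevant stage: the stabilization bound $\tilde{S}\geq\frac{\eta}{4\epsilon}$ for LEMMA \ref{lem:2}, and the nondegeneracy assumption $\int_D \phi^{n+1}\,d\mathbf{x} - \beta V_0 \neq 0$ needed for the variable-step argument in LEMMA \ref{lem:3} (so that $\beta_0^*$ is well defined). The one place where a reader might expect more than bare transitivity is the extension to general $K>1$: there the inner while-loop repeats the phase-field and Lagrange updates $K$ times, so one would iterate LEMMA \ref{lem:2} and LEMMA \ref{lem:3} along the inner sequence $(\phi^{n+1,k},\lambda^{n+1,k})$ and telescope, using $\phi^{n+1}=\phi^{n+1,K}$ and $\lambda^{n+1}=\lambda^{n+1,K}$; the velocity $\mathbf{u}^{n+1}$ stays frozen throughout the inner loop, so LEMMA \ref{lem:1} is still applied only once per outer step. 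That telescoping is the only aspect I would expect to demand explicit attention beyond directly quoting the lemmas.
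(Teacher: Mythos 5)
Your proposal matches the paper's own proof, which is just the one-line observation that the result follows by chaining LEMMA \ref{lem:1}, LEMMA \ref{lem:2}, and LEMMA \ref{lem:3} via transitivity exactly as in the schematic at the start of \cref{sec:ENS}. Your added bookkeeping remarks (checking $\tilde{S}\geq\frac{\eta}{4\epsilon}$, the nondegeneracy of $J_v(\phi^{n+1})$, and the telescoping for $K>1$) go slightly beyond what the paper writes, but the argument is the same.
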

\begin{proof}
The proof follows by the forthcoming LEMMA \ref{lem:1}-LEMMA \ref{lem:3}.
\end{proof}

\section{Fully discretization finite element method}
\label{sec:FED}
We first introduce the weak formulations of Stokes equation (\ref{eq:state-equation-scheme-1}) and phase-field equation (\ref{eq:semi-implicit-allen-cahn-equation-scheme-2}) in our algorithm, respectively. They are described as follows,
to find $\mathbf{u} \in H^1(D)$ and $p\in L^2(D)$ under the Dirichlet condition $\mathbf{u}=\mathbf{g}$ on the $\Gamma_{in}$ such that
\begin{equation}
	\label{eq:state-equation-weak-form}
		\left\{
		\begin{aligned}
			& (\nabla \mathbf{u}^{n+1},\nabla \mathbf{w}) + (\nabla p^{n+1},\mathbf{w}) + (\alpha(\phi^n) \mathbf{u}^{n+1},\mathbf{w}) = 0,    & \mathbf{w} \in H^1_0(D),\\
			& (\nabla \cdot \mathbf{u}^{n+1},q) = 0,  & q \in L^2(D),
		\end{aligned}
		\right.
\end{equation}
and to find $\phi\in H^1(D)$ such that
\begin{equation}
\label{eq:allen-cahn-equation-weak-form}
\begin{aligned}
	&\frac{1}{\Delta t}(\phi^{n+1,*},\psi) + \epsilon \eta (\nabla \phi^{n+1,*},\nabla \psi) + \left( \left(\frac{\alpha_0|\mathbf{u}^{n+1}|^2}{2} + \tilde{S}\right)\phi^{n+1,*},\psi \right)   \\
	=& \frac{1}{\Delta t} (\phi^{n},\psi) + \left( - \frac{\eta}{\epsilon} f(\phi^{n}) + \alpha_0|\mathbf{u}^{n+1}|^2 - \lambda^n,\psi \right)  + \left(  \left(\tilde{S} - \frac{1}{2}\alpha_0|\mathbf{u}^{n+1}|^2\right) \phi^{n}, \psi \right), 
\end{aligned}
\end{equation}
where $\psi \in H^1_0(D)$.

Assume that $\mathcal{T}_h$ is a given mesh, for instance, triangular mesh in two dimensions and tetrahedral mesh in three dimensions.
We donate $P_1(\mathcal{T}_h)$ as the piecewise linear element space and $P_2(\mathcal{T}_h)$ as the quadratic element space defined on the mesh $\mathcal{T}_h$, respectively. Before describing the fully-discrete algorithm, we first give some definitions on the finite-dimensional spaces,
$$
\begin{aligned}
	&\mathcal{P}_h(\mathcal{T}_h):= H^1(D) \cap P_1(\mathcal{T}_h), \quad \quad \mathcal{P}_{h,0}(\mathcal{T}_h):= H^1_0(D) \cap P_1(\mathcal{T}_h), \\
	&\mathcal{Q}_h(\mathcal{T}_h):= H^1(D) \cap P_2(\mathcal{T}_h), \quad \quad \mathcal{Q}_{h,0}(\mathcal{T}_h):= H^1_0(D) \cap P_2(\mathcal{T}_h), \\
	&\mathcal{M}_{h}(\mathcal{T}_h):= L^2(D) \cap P_1(\mathcal{T}_h).
\end{aligned}
$$

\subsection{The fully-discrete version}
In this paper, we use the Taylor-Hood (P2-P1) element to solve the Stokes equation and employ the piecewise finite element to approximate the solution of phase-field equation on the same mesh $\mathcal{T}_h$.
In our designed decoupled algorithm (see, for instance, Algorithm \ref{alg:total-scheme-fully-discrete}), the Stokes equation (\ref{eq:fully-state-equation-scheme-1}) and the Allen-Cahn-type equation (\ref{eq:fully-semi-implicit-allen-cahn-equation-scheme-2}) are solved in an alternative way. In each loop, we compute the velocity field by solving the Stokes equation with the phase-field variable given in the previous iteration and update the phase-field variable by solving the phase-field equation with the velocity field given in the previous solution. However, the solutions are mismatched between the velocity field and the phase field due to employing different polynomials as the basis functions on the same mesh. Projection and interpolation (e.g., \cite{Wu18A}) are simply used to transport datas, which ensures the implementation of our designed decoupled scheme. (For instance, the finite element values of the two variables in 2D are described in Fig.\ref{fig:communication-p1-p2-element}.)
\begin{figure}[H]
	\centering
	\includegraphics[width=0.35\textwidth]{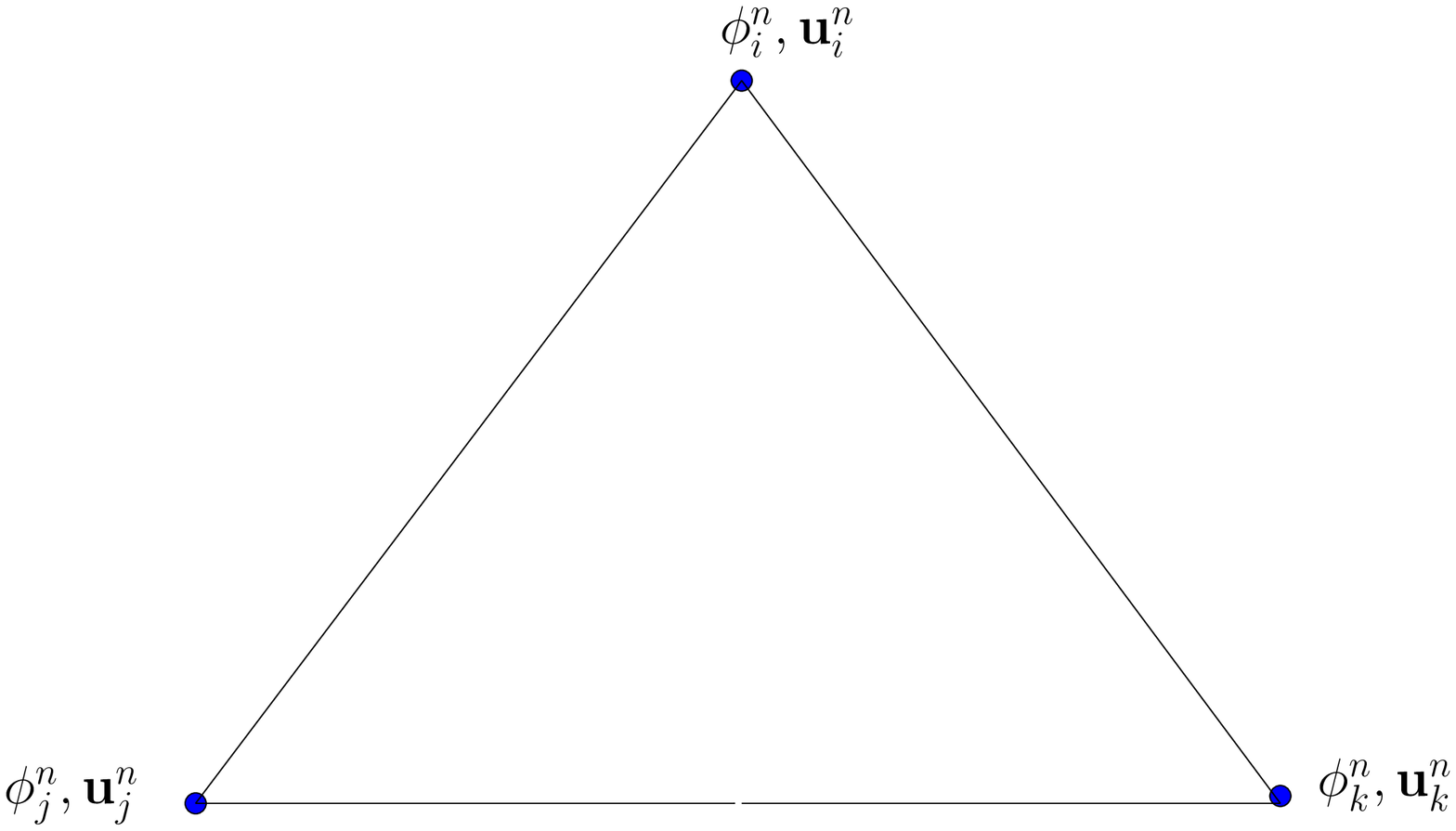} \hspace{1cm}
	\includegraphics[width=0.35\textwidth]{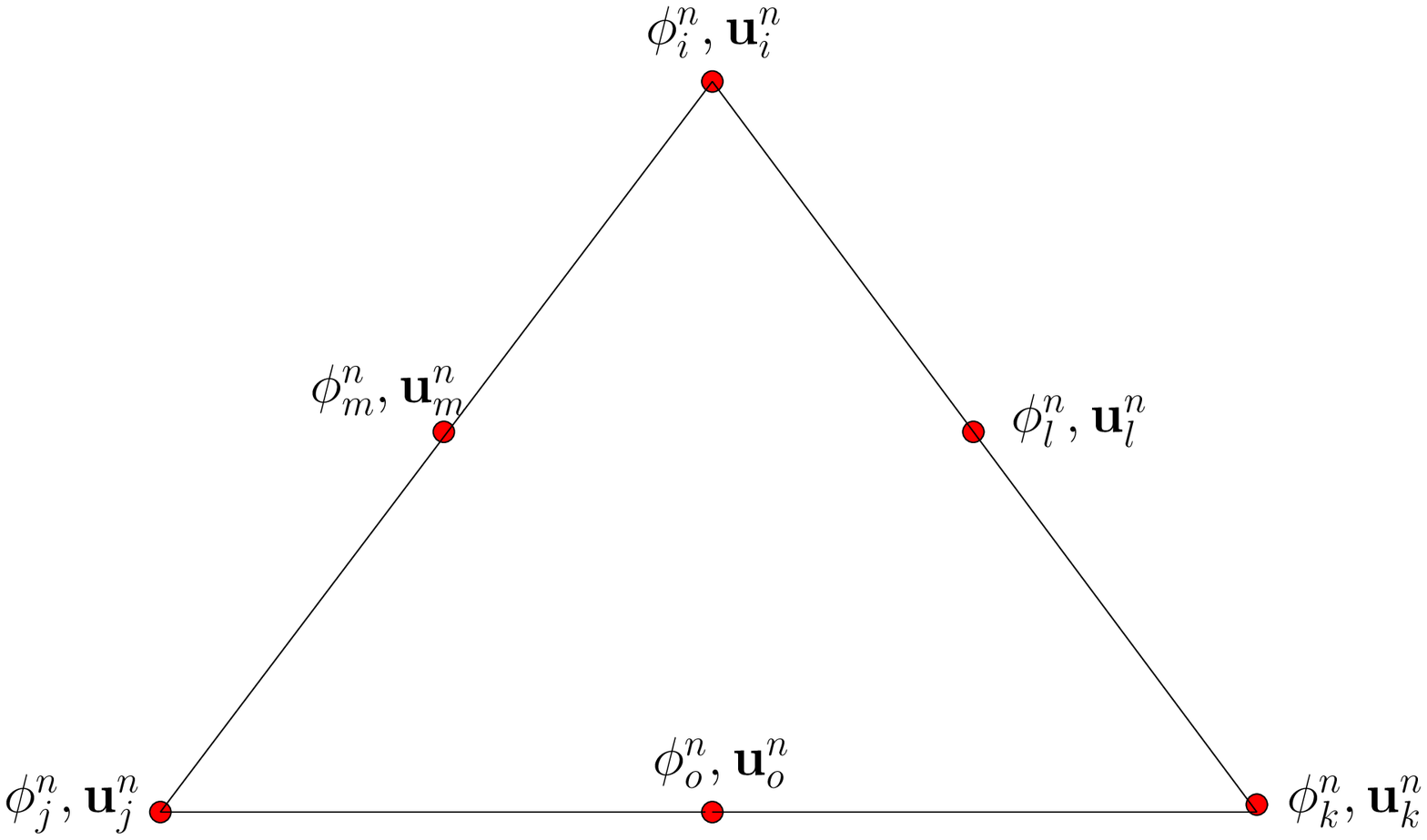} 
	\caption{Projection and interpolation between P1 element and P2 element.}
	\label{fig:communication-p1-p2-element}
\end{figure}

\begin{algorithm}[!h] 
	\caption{Phase-field model for shape optimization in the fully-discrete version}
	\label{alg:total-scheme-fully-discrete}
	Given the parameters $N$, $K$, $\alpha_0$, $\epsilon$, $\eta$, $\beta$, $\Delta t$, initialize $\phi^{0}_h$, $\lambda^{0}$, and set $n=0$.
	
	\textbf{while} $n < N$  \textbf{do}
	\begin{enumerate} 
		\item Update the velocity field $\mathbf{u}^{n+1}_h$ $(\in \mathcal{Q}_{h}(\mathcal{T}_h))$ by the following state equation using the Taylor-Hood element,
		\begin{equation} \label{eq:fully-state-equation-scheme-1}
			\left\{
			\begin{aligned}
				& (\nabla \mathbf{u}^{n+1}_h,\nabla \mathbf{w}_h) + (\nabla p^{n+1}_h,\mathbf{w}_h) + (\alpha(\phi^n_h) \mathbf{u}^{n+1}_h,\mathbf{w}_h) = 0,    & \mathbf{w}_h \in \mathcal{Q}_{h,0}(\mathcal{T}_h),\\
				& (\nabla \cdot \mathbf{u}^{n+1}_h,q_h) = 0,  & q_h \in \mathcal{M}_{h}(\mathcal{T}_h).
			\end{aligned}
			\right.
		\end{equation}	
		\item Set $\phi^{n+1,0}_h=\phi^n_h$, $\lambda^{n+1,0}=\lambda^n$ and $k=0$. 
		
		\textbf{while} $k < K$  \textbf{do}
		\begin{enumerate}[(1)]
			\item Update the phase-field variable with the piecewise linear element and obtain $\phi^{n+1,k+1,*}_h$ $(\in \mathcal{P}_{h}(\mathcal{T}_h))$,
			\begin{equation} \label{eq:fully-semi-implicit-allen-cahn-equation-scheme-2}
				\begin{aligned}
					&\frac{1}{\Delta t}(\phi^{n+1,k+1,*}_h,\psi_h) + \epsilon \eta (\nabla \phi^{n+1,k+1,*}_h,\nabla \psi_h) \\
					&+ \left( \left(\frac{\alpha_0|\mathbf{u}^{n+1}_h|^2}{2} + \tilde{S}\right)\phi^{n+1,k+1,*}_h,\psi_h \right)   \\
					=& \frac{1}{\Delta t} (\phi^{n+1,k}_h,\psi_h) + \left( - \frac{\eta}{\epsilon} f(\phi^{n+1,k}_h) + \alpha_0|\mathbf{u}_h^{n+1}|^2 - \lambda^n,\psi_h \right) \\
					& \left(  \left(\tilde{S} - \frac{\alpha_0|\mathbf{u}_h^{n+1}|^2}{2} \right) \phi^{n+1,k}_h, \psi_h \right), \psi_h \in \mathcal{P}_{h,0}(\mathcal{T}_h),
				\end{aligned}
			\end{equation}
			where $\tilde{S}$ is a stabilized parameter introduced in \cite{Shen10Numerical}.
			\item Restrict the phase-field function in $[0,1]$ by using a cut-off postprocessing strategy and obtain $ \phi^{n+1,k+1}_h$ 
			\begin{equation} \label{eq:fully-trunction-fully-discrization}
				\phi^{n+1,k+1}_{h}(\mathbf{x})=\mathcal{P}(\phi^{n+1,k+1,*}_{h}(\mathbf{x})):=\min(\max(\phi^{n+1,k+1,*}_h(\mathbf{x}),0),1),
			\end{equation}
			where the sign $\mathcal{P}$ denotes a cut-off operator.
			\item Update the Lagrange multiplier $\lambda^{n+1,k+1}$ by 
			\begin{equation} \label{eq:fully-lambda-scheme-4}
				\lambda^{n+1,k+1}= \lambda^{n+1,k}-
				\left\{
				\begin{aligned}
					&
					\beta_0 J_v(\phi^{n+1,k+1}_h),   
					\ J_v(\phi^{n+1,k+1,*}_h) \geq J_v(\phi^{n+1}_h) , \\
					&  \beta_0^* J_v(\phi^{n+1,k+1}_h),    
					\ J_v(\phi^{n+1,k+1,*}_h) < J_v(\phi^{n+1,k+1}_h) ,
				\end{aligned}
				\right.
			\end{equation} 
			where $\beta_0$ denotes a fixed proper time step, $ \beta_0^*$ is a adjusted variable time step as $ \lambda^{n+1,k} (\int_D \phi^{n+1,k+1}_h - \phi^{n+1,k+1,*}_h dx) /(\int_D \phi^{n+1,k+1}_h dx-\beta V_0)^2$, and $J_v(\phi)$ is defined by $ \int_D \phi d\mathbf{x} - \beta V_0$.
		\end{enumerate}
		\textbf{end while}
	\end{enumerate}
	Set $\phi_h^{n+1}=\phi_h^{n+1,K}$, $\lambda^{n+1}=\lambda^{n+1,K}$, and go to Step 1.
	
	\textbf{end while}
\end{algorithm}

Given an initial guess $\phi^0_h$ and $\lambda^0$, all variables in the full-discrete system are updated one by one as follows,
$$
\left(\mathbf{u}^1_h, \phi^{1,*}_h, \phi^{1}_h, \lambda^1\right),\left(\mathbf{u}^2_h, \phi^{2,*}_h, \phi^{2}_h, \lambda^2\right), \cdots,\left(\mathbf{u}^n_h, \phi^{n,*}_h, \phi^{n}_h, \lambda^n\right), \cdots.
$$
All Specific details are present in the Algorithm \ref{alg:total-scheme-fully-discrete}. 

In the following part, we mainly illustrate the monotonic-decaying property similar to the semi-discrete version that is shown in \cref{sec:ENS}.
$$
\begin{aligned}
	L\left(\phi^{n+1}_h, \mathbf{u}^{n+1}_h,\lambda^{n+1}\right) 
	\leq
	L\left(\phi^{n+1,*}_h,\mathbf{u}^{n+1}_h,\lambda^{n}\right) 
	\leq
	L\left(\phi^{n}_h,\mathbf{u}^{n+1}_h,\lambda^{n}\right) 
	\leq
	L\left(\phi^{n}_h,\mathbf{u}^{n}_h,\lambda^{n}\right),
\end{aligned}
$$
for $n=0,1,2,\cdots$.

Next we just need to deduce the inequality $\left\|\nabla \phi^{n+1}_h \right\|^2 \leq \left\|\nabla \phi^{n+1,*}_h\right\|^2$ in the fully-discrete sense. Other steps of the proof of the fully-discrete decoupled monotonic-decaying scheme are same to the ones in \cref{sec:ENS} and we omit the corresponding contents.

\begin{lemma} \label{lem:4}
	Assume that the phase-field equation $(\ref{eq:fully-semi-implicit-allen-cahn-equation-scheme-2})$ is solved with piecewise linear finite element method. After applying a cut-off postprocessing $(\ref{eq:fully-trunction-fully-discrization})$ to the phase-field variable, we have the following conclusion,
	\begin{equation}
		\left\|\nabla \phi_h^{n+1} \right\|^2 \leq \left\| \nabla \phi_h^{n+1,*} \right\|^2.
	\end{equation}
\end{lemma}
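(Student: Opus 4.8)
The plan is to reduce the claim to a purely algebraic inequality for the stiffness matrix of $\mathcal{P}_h(\mathcal{T}_h)$, because the pointwise domain-splitting argument of LEMMA \ref{lem:3} is no longer available in the finite element setting: the cut-off $(\ref{eq:fully-trunction-fully-discrization})$ acts on the \emph{nodal} values, so $\phi_h^{n+1}$ is the $P_1$ interpolant of the clipped nodal data rather than the literal truncation of $\phi_h^{n+1,*}$, and the clipped $P_1$ function generally differs from $\min(\max(\phi_h^{n+1,*},0),1)$ away from the nodes. Write $\phi_h^{n+1,*}=\sum_i \phi_i^{*}\varphi_i$ and $\phi_h^{n+1}=\sum_i \phi_i\varphi_i$, where $\{\varphi_i\}$ is the nodal basis of $\mathcal{P}_h(\mathcal{T}_h)$ and $\phi_i=\mathcal{P}(\phi_i^{*})=\min(\max(\phi_i^{*},0),1)$. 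With the stiffness matrix $A=(a_{ij})$, $a_{ij}:=(\nabla\varphi_i,\nabla\varphi_j)$, one has $\|\nabla\phi_h^{n+1}\|^2=\sum_{i,j}a_{ij}\phi_i\phi_j$ and likewise for $\phi_h^{n+1,*}$.

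First I would exploit the partition-of-unity identity $\sum_j\varphi_j\equiv 1$ on $D$, which gives $\sum_j\nabla\varphi_j=\mathbf{0}$ and hence the zero-row-sum property $\sum_j a_{ij}=0$ for every $i$. Substituting $a_{ii}=-\sum_{j\neq i}a_{ij}$ and using the symmetry $a_{ij}=a_{ji}$, the quadratic form telescopes into
\begin{equation} \label{eq:lem4-quadform}
\|\nabla\phi_h^{n+1}\|^2=\sum_{i,j}a_{ij}\phi_i\phi_j=-\frac{1}{2}\sum_{i\neq j}a_{ij}(\phi_i-\phi_j)^2,
\end{equation}
and analogously $\|\nabla\phi_h^{n+1,*}\|^2=-\frac{1}{2}\sum_{i\neq j}a_{ij}(\phi_i^{*}-\phi_j^{*})^2$. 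This rewrites each seminorm as a linear combination of squared nodal differences whose weights are $-a_{ij}$.

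Next I would use the contraction property of the cut-off. Since $\mathcal{P}$ is the composition of $\max(\cdot,0)$ and $\min(\cdot,1)$, it is $1$-Lipschitz on $\mathbb{R}$, so for each pair of nodes
\begin{equation} \label{eq:lem4-contract}
(\phi_i-\phi_j)^2=\left(\mathcal{P}(\phi_i^{*})-\mathcal{P}(\phi_j^{*})\right)^2\leq(\phi_i^{*}-\phi_j^{*})^2.
\end{equation}
Comparing $(\ref{eq:lem4-quadform})$ with its starred analogue term by term through $(\ref{eq:lem4-contract})$ then yields the result, \emph{provided} the weights satisfy $-a_{ij}\geq 0$, i.e. the off-diagonal stiffness entries obey $a_{ij}\leq 0$ for $i\neq j$.

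The main obstacle is exactly this sign condition, which is the essential gap between LEMMA \ref{lem:3} and the fully-discrete statement: for a general triangulation the off-diagonal entries need not be nonpositive, and then $(\ref{eq:lem4-quadform})$ alone does not force monotonicity of the seminorm under nodal clipping. I would close it by invoking the standard geometric hypothesis that $\mathcal{T}_h$ is non-obtuse (equivalently Delaunay/acute in $2$D, with the corresponding dihedral-angle condition on tetrahedra in $3$D), which is precisely the discrete-maximum-principle condition guaranteeing $a_{ij}\leq 0$ via a direct element-by-element evaluation of $(\nabla\varphi_i,\nabla\varphi_j)$. Granting this, each summand obeys $-a_{ij}(\phi_i-\phi_j)^2\leq -a_{ij}(\phi_i^{*}-\phi_j^{*})^2$ as a comparison of nonnegative quantities, and summing over $i\neq j$ delivers $\|\nabla\phi_h^{n+1}\|^2\leq\|\nabla\phi_h^{n+1,*}\|^2$, completing the argument.
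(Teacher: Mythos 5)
Your argument is correct, but it takes a genuinely different route from the paper's, and the difference is substantive rather than cosmetic. The paper works element by element: it pulls each triangle back to the reference element, observes that there the $P_1$ gradient is the vector of nodal differences $(\phi_2-\phi_1,\phi_3-\phi_1)$, each of which contracts under the $1$-Lipschitz clip, and concludes that $|\nabla\phi|^2$ decreases before summing over elements with the Jacobian factor $2S_i$. You instead assemble the global stiffness matrix and use the zero-row-sum identity from $\sum_j\varphi_j\equiv 1$ to write
\begin{equation*}
\left\|\nabla\phi_h\right\|^2=-\frac{1}{2}\sum_{i\neq j}a_{ij}\left(\phi_i-\phi_j\right)^2 ,
\end{equation*}
then compare term by term via the contraction of nodal differences. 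Both proofs run on the same engine (the clip is $1$-Lipschitz and monotone on nodal values), but yours makes explicit the price of that reduction: the weights $-a_{ij}$ must be nonnegative, which is why you import the non-obtuse/Delaunay angle condition. That extra hypothesis is not an artifact of your method. The paper's change of variables $\int_{T_i}|\nabla\phi_h|^2\,d\mathbf{x}=2S_i\int_{T_*}|\nabla\phi_{*h}|^2\,d\hat{\mathbf{x}}$ silently drops the metric factor $B^{-T}$ by which gradients transform under the affine map; on a physical element $|\nabla\phi_h|^2$ is the quadratic form $d^{\top}(B^{\top}B)^{-1}d$ in the nodal differences $d$, whose cross term $(\phi_2-\phi_1)(\phi_3-\phi_1)$ vanishes only for a right angle at the shared vertex, so componentwise contraction of $|d_1|,|d_2|$ alone does not close the element-level estimate, and some geometric restriction on $\mathcal{T}_h$ (in the assembled form, $a_{ij}\le 0$ via the cotangent formula) is genuinely needed for the lemma as stated. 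In short: the paper's approach buys locality and apparent elementary-ness but is incomplete for general triangulations, while yours buys a correct identification of exactly where the mesh geometry enters, at the cost of an explicit angle assumption that the paper should also be stating.
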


\begin{proof}
	Let us consider only the 2D case (the 3D one is similar). Without loss of generality, we can assume that $T_i$ is an arbitrary triangular element  of mesh $\mathcal{T}_h$ and $T_{*}$ is a reference element with three vertices $(0,0)$, $(0,1)$ and $(1,0)$.  
	It is clear to find that the solution space of phase-field equation is generalized by the linear span of linear functions since we use the typical $P1$ finite element, which implies that the coordinates $(0,0,\phi^{n+1,*}_{*hi1})$, $(0,1,\phi^{n+1,*}_{*hi2})$ and $(1,0,\phi^{n+1,*}_{*hi3})$ are on a plane. Therefore, we have the equation of plane
\begin{equation} \label{eq:plane}
\phi_{*h}^{n+1,*} - \phi_{*hi1} = (\phi^{n+1,*}_{*hi2}-\phi^{n+1,*}_{*hi1})x + (\phi^{n+1,*}_{*hi3}-\phi^{n+1,*}_{*hi1})y, 
\end{equation}
where $\phi^{n+1,*}_{*hi1}$, $\phi^{n+1,*}_{*hi2}$, and $\phi^{n+1,*}_{*hi3}$ are all the nodal values of phase-field function on the reference element $T_{*}$ that is mapped from the $i$th triangular element $T_i$. $\phi_{*h}^{n+1,*}$ denotes the unknown phase-field value on the reference element. 

So using (\ref{eq:plane}), the gradient of phase-field function can be derived as
\begin{equation}\label{eq:gradient-plane}
|\nabla \phi_{*h}^{n+1,*}|^2 = (\phi^{n+1,*}_{*hi2}-\phi^{n+1,*}_{*hi1})^2 + (\phi^{n+1,*}_{*hi3}-\phi^{n+1,*}_{*hi1})^2.
\end{equation}
After a cut-off postprocessing for the phase-field function, the height differences between the arbitrary two nodal values decrease. It tells us that the following inequality satisfies,
\begin{equation}\label{eq:cut-off-relation-gradient-plane}
(\phi^{n+1}_{*hi2}-\phi^{n+1}_{*hi1})^2 \leq   (\phi^{n+1,*}_{*hi2}-\phi^{n+1,*}_{*hi1})^2, \quad (\phi^{n+1}_{*hi3}-\phi^{n+1}_{*hi1})^2 \leq  (\phi^{n+1,*}_{*hi3}-\phi^{n+1,*}_{*hi1})^2.
\end{equation}
Combining (\ref{eq:gradient-plane}) and (\ref{eq:cut-off-relation-gradient-plane}), we have 
\begin{equation} \label{eq:gradient-plane-result}
	|\nabla \phi_{*h}^{n+1}|^2 \leq |\nabla \phi_{*h}^{n+1,*}|^2
\end{equation}

Next we consider an arbitrary triangular element $T_i$, and map linearly it to the reference element that we have discussed. Indeed, the original plane is mapped into a new plane and the phase-field values $\phi^{n+1,*}_{hij}$ $(j=1,2,3)$ have changed into $\phi^{n+1,*}_{*hij}$ $(j=1,2,3)$. The inequality (\ref{eq:cut-off-relation-gradient-plane}) satisfies since the height differences between the arbitrary two nodal values of the mapped reference element still decrease. And then we can obtain the same result (\ref{eq:gradient-plane-result}).

Using (\ref{eq:gradient-plane-result}), taking $\phi_{*h}^{n+1,*}$ as the phase-field values of the three nodal points on the reference element, we obtain
\begin{equation} \label{eq:proof-grdient-cut-off}
\begin{aligned}
\left\|\nabla \phi_h^{n+1} \right\|^2 &= \sum_i \int_{T_i} \sum_{j=1}^3 \left|\nabla \phi_{hij}^{n+1}\right|^2  d\mathbf{x}=\sum_i 2 S_i \int_{T_{*}} \sum_{j=1}^3\left|\nabla \phi_{*hij}^{n+1}\right|^2 d\mathbf{x} \\
& \leq \sum_i 2S_i \int_{T_{*}} \sum_{j=1}^3\left|\nabla \phi_{*hij}^{n+1,*}\right|^2 d\mathbf{x} = \sum_i \int_{T_i} \sum_{j=1}^3\left|\nabla \phi_{hij}^{n+1,*}\right|^2  d\mathbf{x} \\
&= \left\| \nabla \phi_h^{n+1,*} \right\|^2,
\end{aligned}
\end{equation}
where $S_i$ denotes the area of the triangular element $T_i$. In fact, the determinant of Jacobian matrix is simply the ratio of the area of the mapped element $T_i$ to that of the reference element $T_{*}$, which is equal to $2S_i$.

Therefore, we complete the proof.
\end{proof}

\begin{theorem}
	Using the numerical Algorithm \ref{alg:total-scheme-fully-discrete} for the objective problem $(\ref{eq:min-obj})$, the total energy holds the monotonic-decaying property,
	\begin{equation}
		L\left(\phi^{n+1}_h,\mathbf{u}^{n+1}_h,\lambda^{n+1}\right) \leq L\left(\phi^{n}_h,\mathbf{u}^{n}_h,\lambda^{n}\right).
	\end{equation}
\end{theorem}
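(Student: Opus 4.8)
The plan is to mirror the semi-discrete argument and split the passage from $(\phi^n_h,\mathbf{u}^n_h,\lambda^n)$ to $(\phi^{n+1}_h,\mathbf{u}^{n+1}_h,\lambda^{n+1})$ into the same three monotone sub-steps used for Algorithm \ref{alg:total-scheme-semi-discrete}, namely the chain
\begin{equation*}
L(\phi^{n+1}_h,\mathbf{u}^{n+1}_h,\lambda^{n+1}) \le L(\phi^{n+1,*}_h,\mathbf{u}^{n+1}_h,\lambda^n) \le L(\phi^{n}_h,\mathbf{u}^{n+1}_h,\lambda^n) \le L(\phi^{n}_h,\mathbf{u}^{n}_h,\lambda^n),
\end{equation*}
so that composing the three inequalities yields the claim. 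Each inequality is the fully-discrete analog of LEMMA \ref{lem:1}, LEMMA \ref{lem:2} and LEMMA \ref{lem:3}, respectively.

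First I would establish the velocity step. Since $(\ref{eq:fully-state-equation-scheme-1})$ is exactly the Galerkin formulation of $(\ref{eq:state-equation-scheme-1})$ on $\mathcal{Q}_{h}(\mathcal{T}_h)\times\mathcal{M}_h(\mathcal{T}_h)$, the test function $\mathbf{u}^n_h-\mathbf{u}^{n+1}_h$ again lies in $\mathcal{Q}_{h,0}(\mathcal{T}_h)$, and the polarization identity $\tfrac12(|a|^2-|b|^2)=(a-b,b)+\tfrac12|a-b|^2$ reproduces $(\ref{eq:update-state-proof-3})$ verbatim, giving $L(\phi^n_h,\mathbf{u}^{n+1}_h,\lambda^n)\le L(\phi^n_h,\mathbf{u}^n_h,\lambda^n)$. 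For the phase-field step I would test $(\ref{eq:fully-semi-implicit-allen-cahn-equation-scheme-2})$ with $\phi^{n+1,*}_h-\phi^n_h\in\mathcal{P}_{h,0}(\mathcal{T}_h)$ and reuse the decomposition of LEMMA \ref{lem:2}: the penalization identity $(\ref{eq:j2-constrian})$, the gradient polarization $(\ref{eq:laplace-constrian-2})$, the truncated Taylor bound $(\ref{eq:volume-Taylor-expansion})$ for the double-well, and the volume identity $(\ref{eq:jv-constrian})$ are purely algebraic and therefore remain valid within the finite-element space, so under $\tilde S\ge \eta/(4\epsilon)$ the same estimate gives $L(\phi^{n+1,*}_h,\mathbf{u}^{n+1}_h,\lambda^n)\le L(\phi^n_h,\mathbf{u}^{n+1}_h,\lambda^n)$.

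The only genuinely new ingredient is the cut-off step. Granting LEMMA \ref{lem:4} in the form $\|\nabla\phi^{n+1}_h\|^2\le\|\nabla\phi^{n+1,*}_h\|^2$, the pointwise monotonicity of $(1-s)^2|\mathbf{u}^{n+1}_h|^2$ and of $s^2(s-1)^2$ under the projection $\mathcal{P}$ of $s$ onto $[0,1]$ yields the decrease of the penalization and double-well terms exactly as in $(\ref{eq:proof-11})$, while the variable step $\beta_0^*$ of $(\ref{eq:fully-lambda-scheme-4})$ is chosen precisely so that $\lambda^{n+1}J_v(\phi^{n+1}_h)\le\lambda^n J_v(\phi^{n+1,*}_h)$, controlling the volume term. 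Combining these gives $L(\phi^{n+1}_h,\mathbf{u}^{n+1}_h,\lambda^{n+1})\le L(\phi^{n+1,*}_h,\mathbf{u}^{n+1}_h,\lambda^n)$, and the chain closes.

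The main obstacle is precisely this last gradient-energy estimate, which is the one place where the continuous argument does not transfer. In the $H^1$ proof of LEMMA \ref{lem:3} the inequality $(\ref{eq:proof-12})$ followed trivially because the cut-off annihilates $\nabla\phi$ on the saturated region $D_1$; in the discrete setting a clamped $P1$ interpolant is generally supported on the whole mesh and need not have vanishing gradient on any subdomain, so one cannot reduce to a subset integration. Instead one must argue element-by-element, mapping each $T_i$ to the reference triangle and showing that shrinking the nodal height differences cannot enlarge the piecewise-constant gradient $(\ref{eq:gradient-plane})$. This is exactly the content supplied by LEMMA \ref{lem:4}, and with it in hand the remaining three inequalities assemble into the asserted monotonic decay.
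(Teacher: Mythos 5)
Your proposal is correct and follows essentially the same route as the paper: the paper's proof simply combines the three semi-discrete lemmas with LEMMA \ref{lem:4}, which is exactly the chain you assemble, and you correctly identify the element-by-element gradient estimate after the cut-off as the only step that does not transfer verbatim from the continuous setting. Your added explanation of \emph{why} the subset-integration argument of $(\ref{eq:proof-12})$ fails for a clamped $P1$ interpolant is a useful clarification, but it does not change the substance of the argument.
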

\begin{proof}
	Combining the derived method of the semi-discrete version and LEMMA \ref{lem:4}, we can directly obtain the conclusion.
\end{proof}

\begin{remark}
If we use high-order finite element methods to approximate the phase-field equation, it is very regrettable that the monotonic-decaying property can't preserve. However, adaptive mesh technique provides a road to improve the computational accuracy with piecewise finite element method.
\end{remark}

\section{Numerical experiments}
\label{sec:Numerical-Simu}
In this section , several cases are presented to validate the effectiveness of our schemes. The basic parameters are given as follows : $\epsilon=0.01$ , $\eta=0.01$ , $\delta t=1$ , $\beta_0=1$ , $K=10$ (inner iteration ).
\subsection{Diffuser}
The first example is the design of a diffuser \cite{Borrvall03Topology}. 
The computational domain is set to be a square $D=[0,1]^2$ and the flow velocity is given as $g = y(1-y)/ 0.25$ at the inlet $\Gamma_i$ on the left. In our calculations, the prescribed volume fraction is set to be $\beta = 0.5$. The initial phase-field function is given as follows,
$$
\phi^0(\mathbf{x})=
\left\{
\begin{aligned}
& 1, \ \mathbf{x} \in \Omega_1,\\
& 0, \ \mathbf{x} \in D \backslash \Omega_1,
\end{aligned}
\right.
$$
where the initial shape is described as $\phi^0(\mathbf{x})=1$ in the domain $\Omega_1$, and $\Omega_1:=\left\{(x,y)|x\leq0.25\right\} \cup \left\{(x,y)| \frac{1}{3} \leq y\leq \frac{2}{3}\right\} \cap D$.
\begin{figure}[h!]
	\centering
	\includegraphics[width=0.3\textwidth]{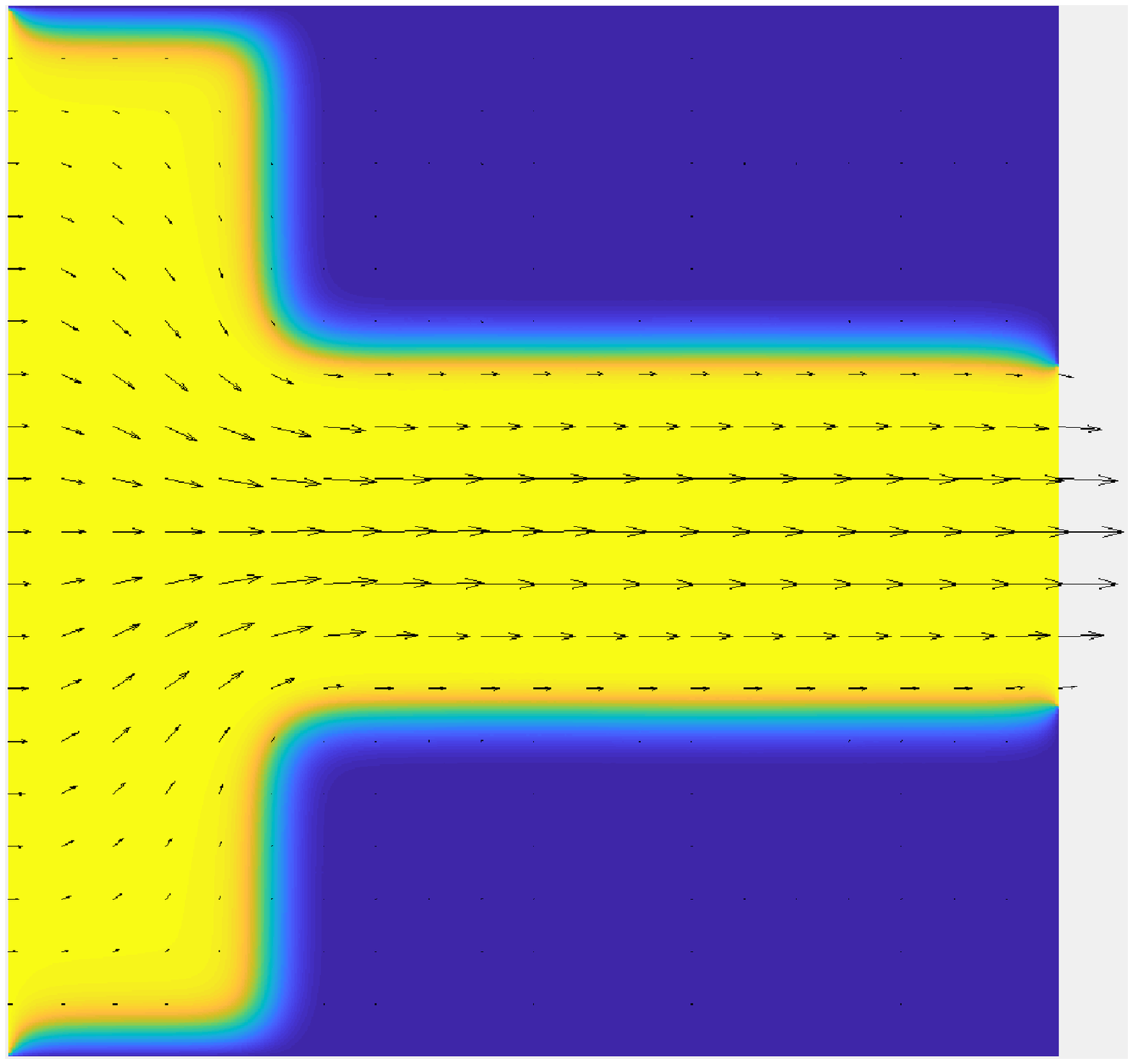} 
	\includegraphics[width=0.3\textwidth]{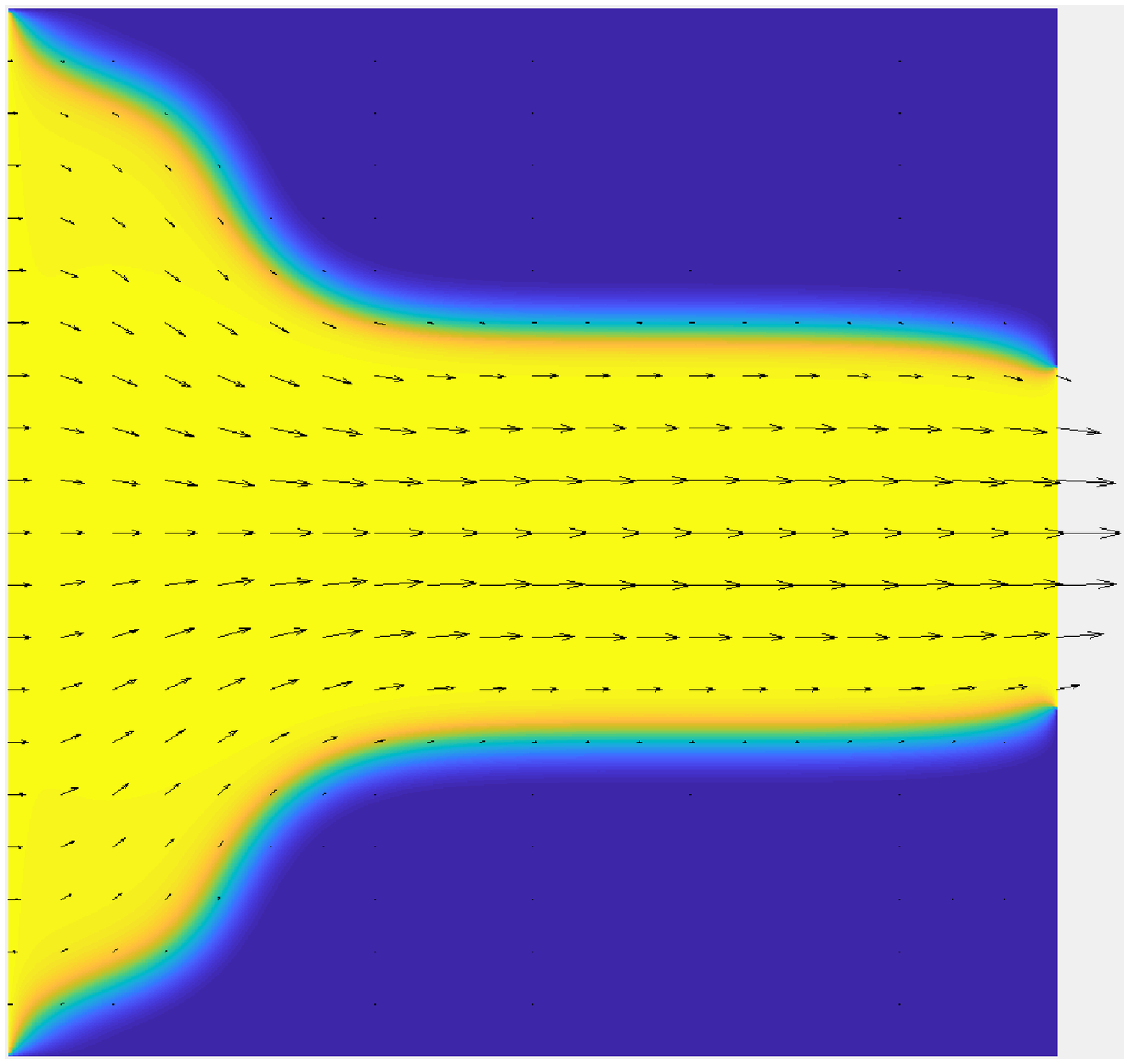}  
	\includegraphics[width=0.3\textwidth]{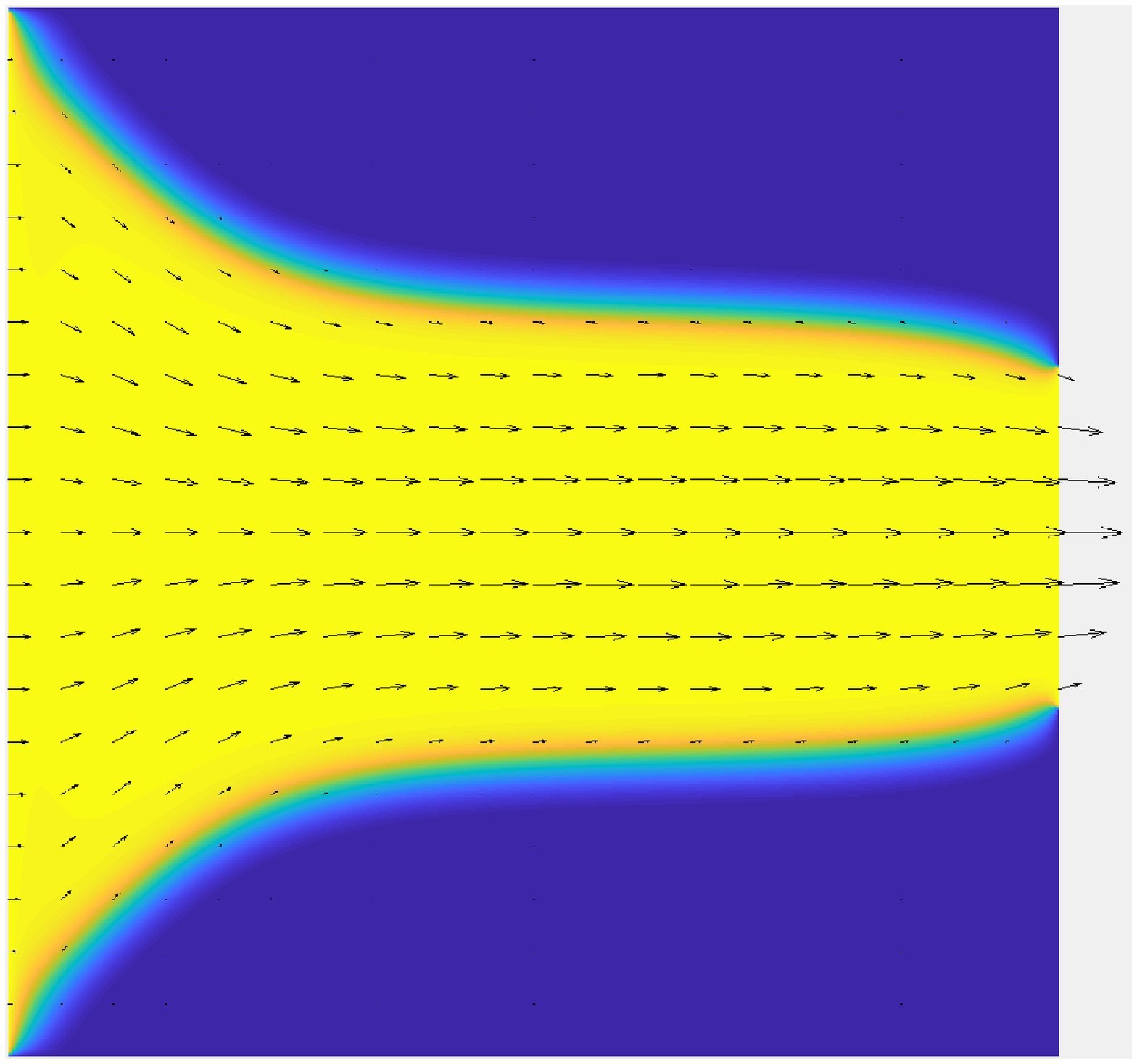}  
	\caption{Phase field functions and the fluid velocity for the diffuser at the 1st, 5th and 20th step, respectively.}
	\label{fig:diffuser-phi}
\end{figure}

Fig.\ref{fig:diffuser-obj} shows the history of objective functional and illustrate that the objective result decreases strictly. A minimum value 30.9258 appears approximately at the 20th step. 
\begin{figure}[h!]
	\centering
	\includegraphics[width=0.58\textwidth,height=3.5cm]{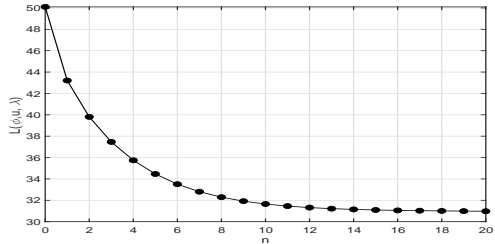} 
	\caption{Objevtive functional for the diffuser.}
	\label{fig:diffuser-obj}
\end{figure}
Furthermore, the phase-field functions of the optimized shapes are shown in Fig.\ref{fig:diffuser-phi}. The fluid velocity mainly appears in the domain of the objective domain described as $\phi=1$. Therefore, the ultimate optimized shape can be found in the interfacial layer, for instance, $\phi=0.5$. It shows no significant difference compared with the results found in Referee \cite{Borrvall03Topology}.

\subsection{Rugby}
The second example is the design of a rugby \cite{Garcke15Numerical,Borrvall03Topology,Kondoh12Drag}, whose free boundary is embedded inside. 
In this case, a computational domain is described as a rectangle $D=[0,1]\times[0,1.5]$ and the maximum flow velocity is $\bar{g} = 1$ at the inlet $\Gamma_i$ on the bottom. In our calculations, the prescribed volume fraction is kept being $\beta = 0.1189$. The initial phase-field function for rugby is given as follows,
$$
\phi^0(\mathbf{x})=
\left\{
\begin{aligned}
& 1, \ \mathbf{x} \in D \backslash \Omega_2,\\
& 0, \ \mathbf{x} \in \Omega_2,
\end{aligned}
\right.
$$
where the initial shape is similarly described as $\phi^0(\mathbf{x})=1$ in the domain $D \backslash \Omega_2$ and the void domain $\Omega_2$ is given as $\left\{(x,y)|0.3 \leq x \leq0.7, 0.2\leq y \leq 0.6\right\} $.
\begin{figure}[h!]
	\centering
	\includegraphics[width=0.3\textwidth]{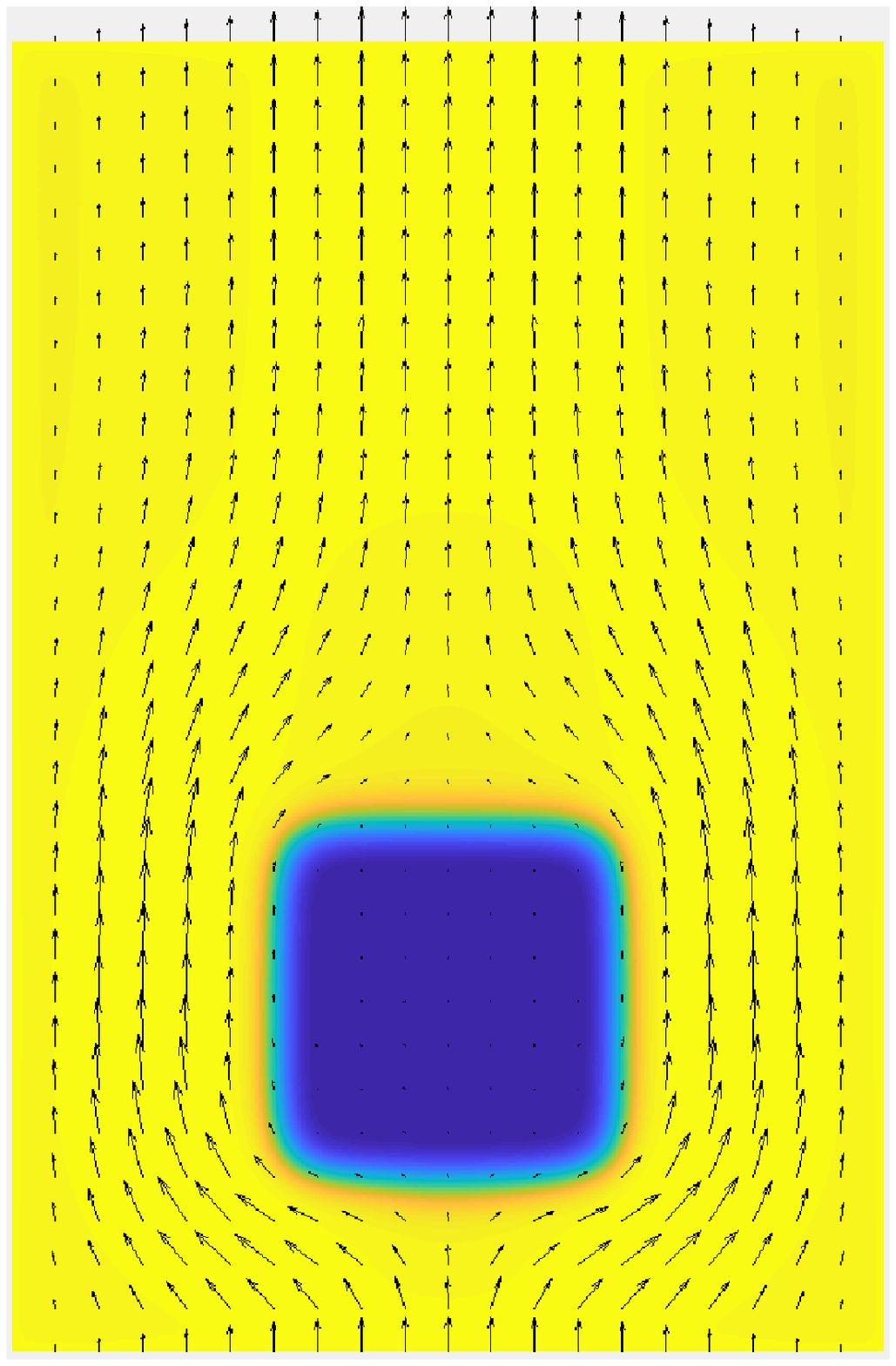} 
	\includegraphics[width=0.3\textwidth]{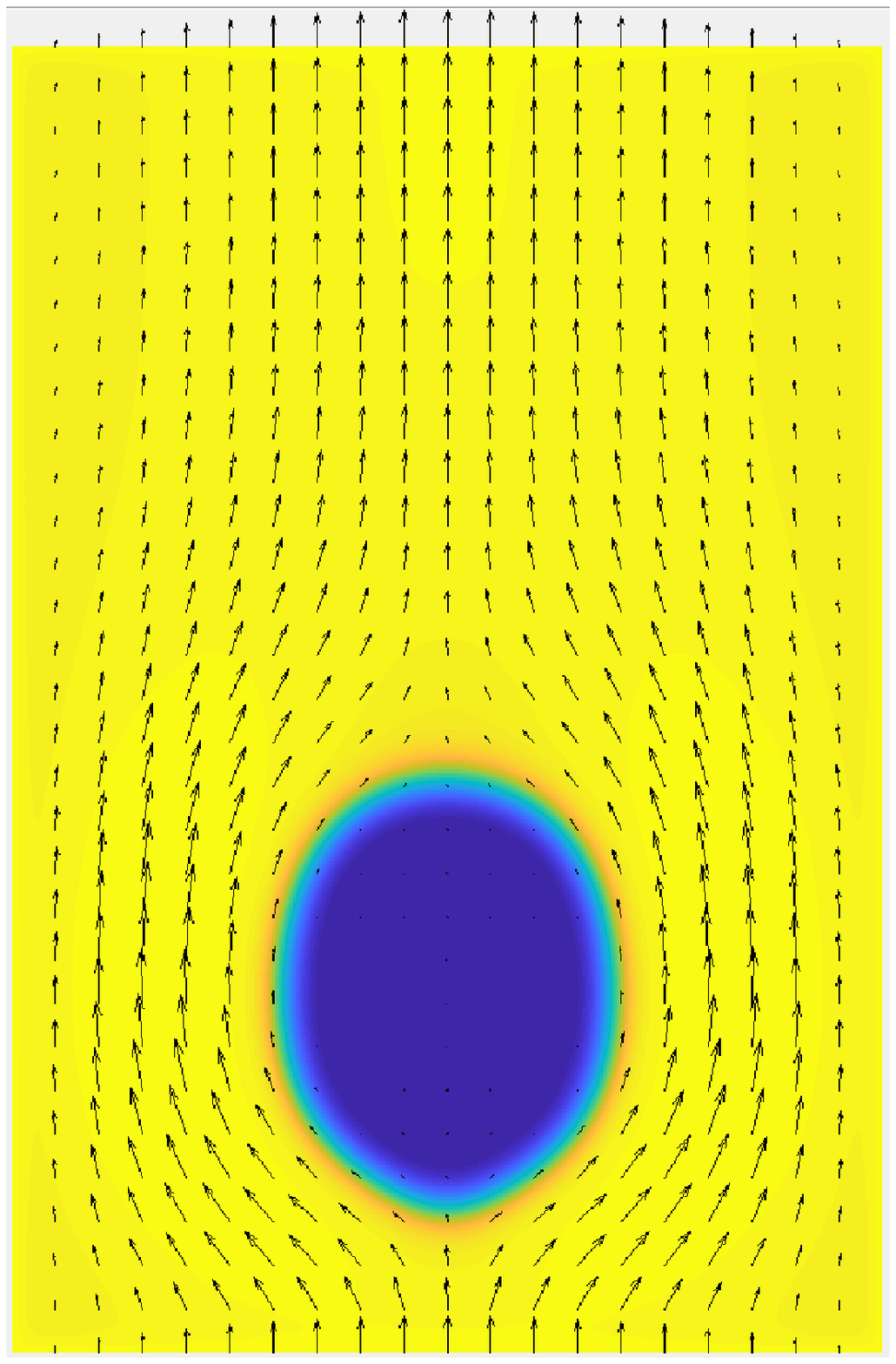}  
	\includegraphics[width=0.3\textwidth]{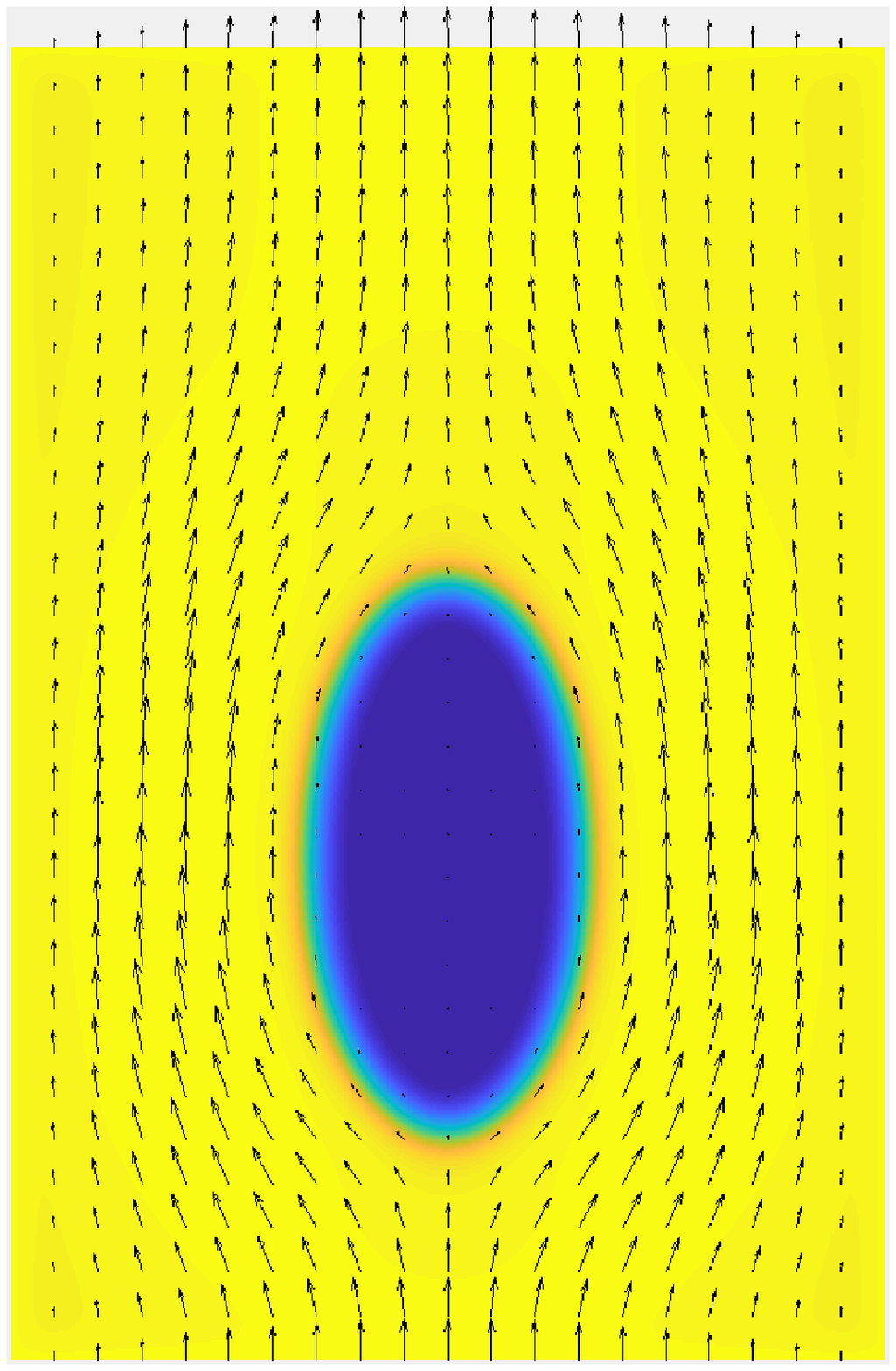}  
	\caption{Phase field functions and the fluid velocity for the rugby at the $1$st, $5$th and $50$th step, respectively.}
	\label{fig:rugby-phi}
\end{figure}

The history of objective functional is given in Fig.\ref{fig:rugby-obj} and a minimum value 25.0306 appears approximately at the 50th step, in which the objective functional decreases strictly with our designed algorithm \ref{alg:total-scheme-fully-discrete} as the time increases. We find that the initial shape of the square becomes an ellipse and the fluid goes from bottom to top away from the inner obstacle in Fig.\ref{fig:rugby-phi}.
\begin{figure}[h!]
	\centering
	\includegraphics[width=0.58\textwidth,height=3.5cm]{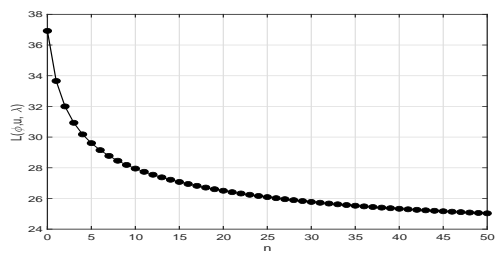} 
	\caption{Objevtive functional for the rugby.}
	\label{fig:rugby-obj}
\end{figure}

\subsection{Bypass}
We now consider a design of the arterial bypass \cite{Zhang15Topology,Abraham05Shape}, which servers as a more complex example. 
The computational domain is set to be a special domain in Fig. \ref{fig:bypass-domain} and the maximum flow velocity is $\bar{g} = 1$ at the inlet $\Gamma_i$ on the left. In our calculations, the prescribed volume fraction is set to be $\beta = 0.2777$. In this case, we adopt the random numbers in $[0,1]$ for the phase-field functions such that the volume is kept. After several iterations, the distributions of the phase-field variable are used for the initial phase-field function. 
\begin{figure}[h!]
	\centering
	\includegraphics[width=0.5\textwidth]{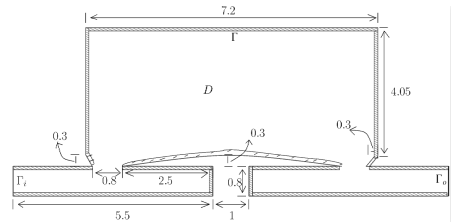}  
	\caption{The computational domain for the bypass.}
	\label{fig:bypass-domain}
\end{figure}
\begin{figure}[h!]
	\centering
	%\vspace{-0.35cm}
	%\subfigtopskip=2pt
	%\subfigbottomskip=2pt
	%\subfigcapskip=-5pt
	\subfigure[$1$st step]{
		%\label{1}
		\includegraphics[width=0.4\linewidth]{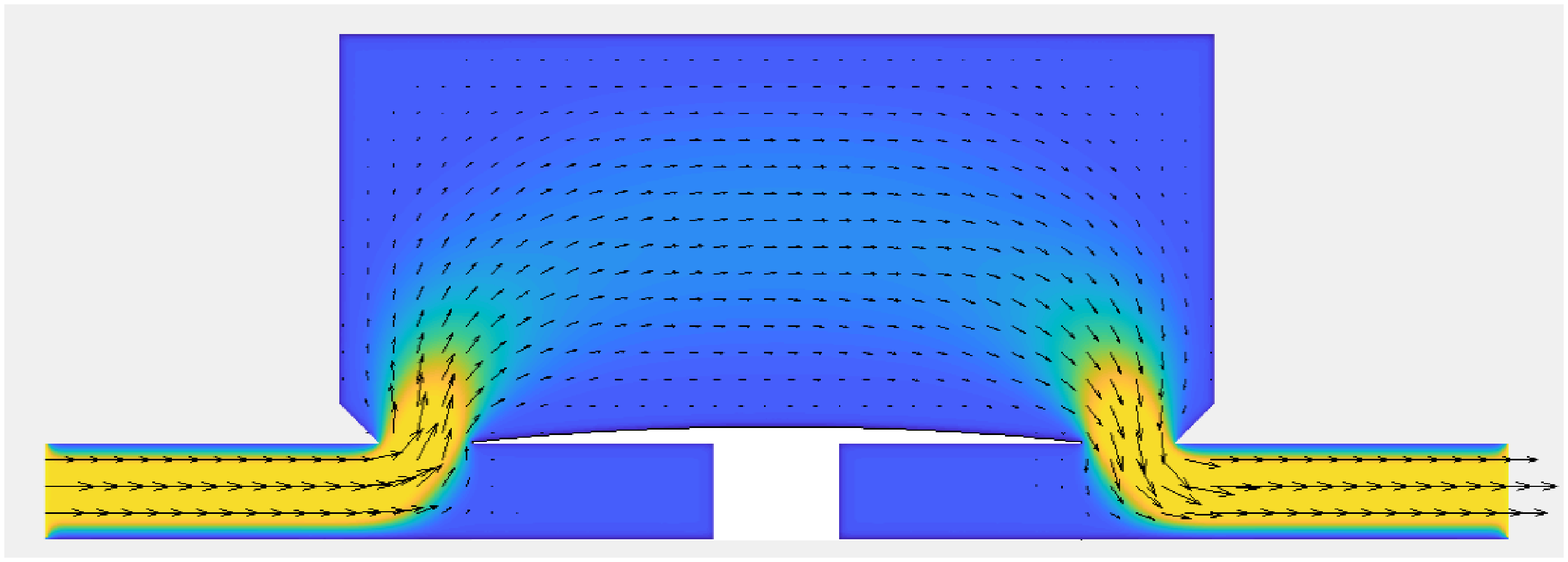}}
	%\quad
	\subfigure[$5$th step]{
		%\label{leve}
		\includegraphics[width=0.4\linewidth]{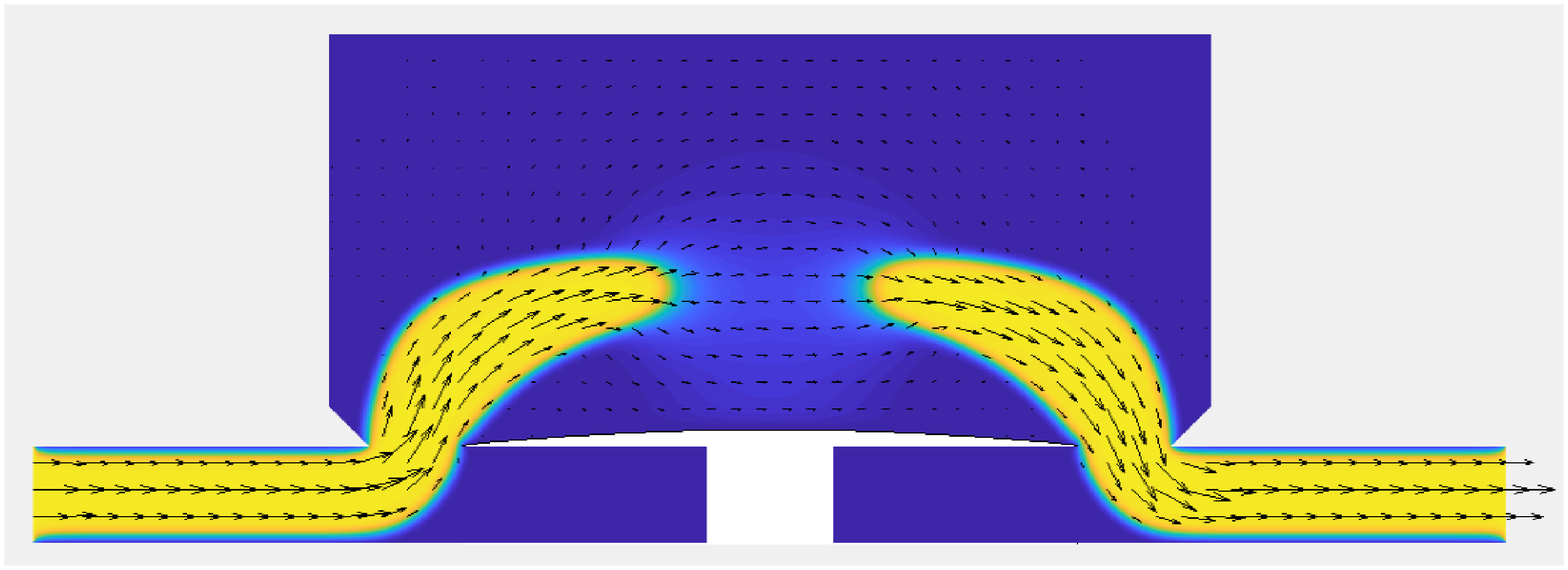}}
	\subfigure[$10$th step]{
		%\label{level.sub.3}
		\includegraphics[width=0.4\linewidth]{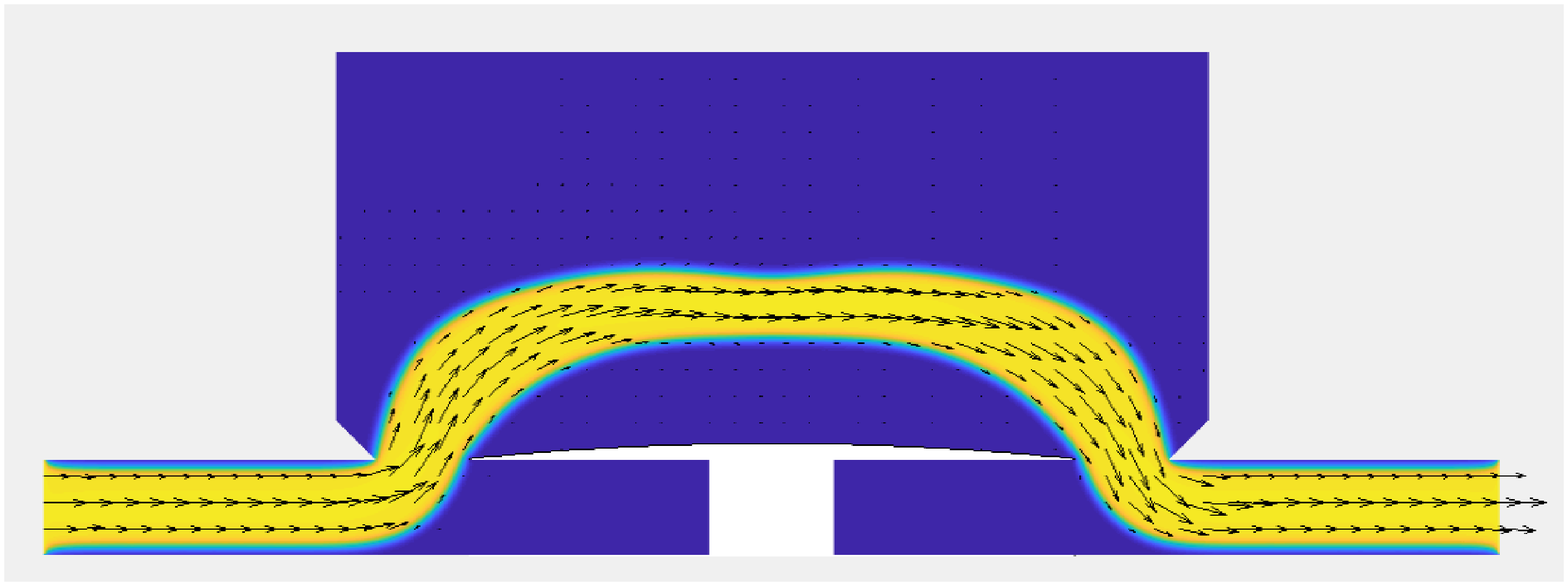}}
	%\quad
	\subfigure[$50$th step]{
		%	\label{level.sub.4}
		\includegraphics[width=0.4\linewidth]{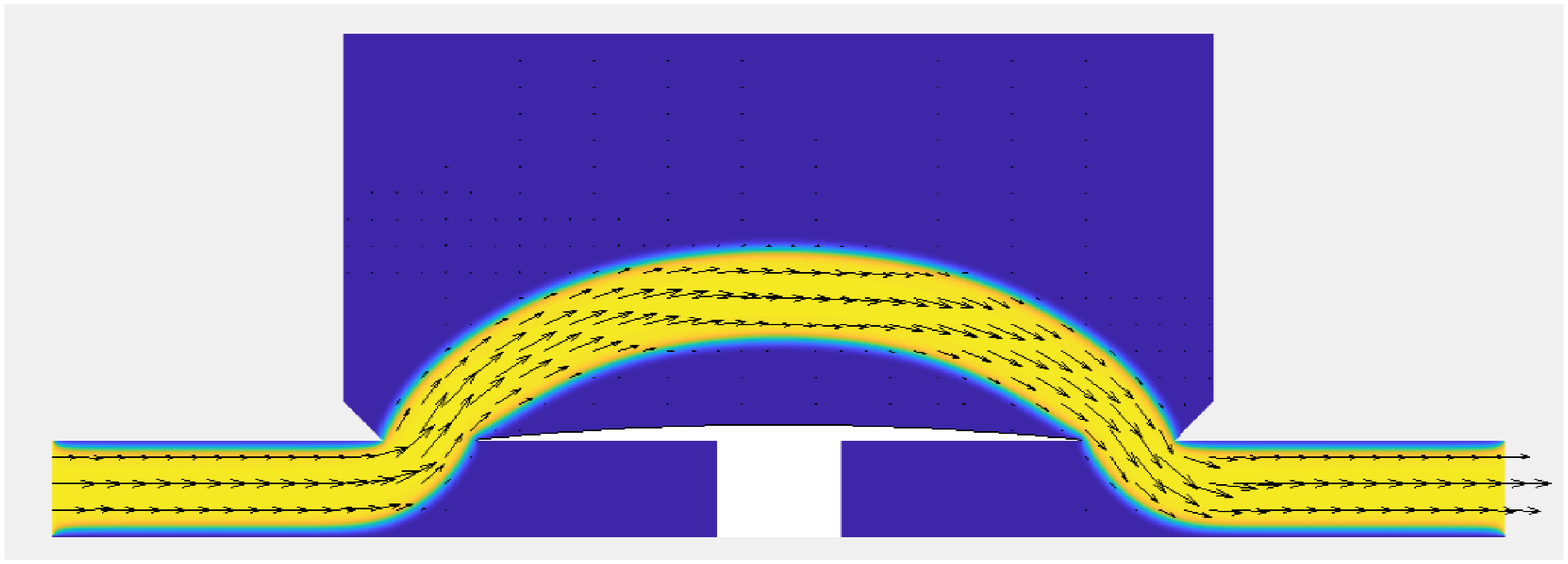}}
	\caption{Phase field functions and the fluid velocity for the bypass.}
	\label{fig:bypass-phi}
\end{figure}

Fig.\ref{fig:bypass-obj} shows the history of objective functional and a minimum value 76.1621  reaches approximately at the 50th step. As time goes, we find that the shapes of arterial vessels form after about 10th step in Fig.\ref{fig:bypass-phi}. After 50th step, the ultimate shape appears, which is similar to the results in \cite{Abraham05Shape,Zhang15Topology}.
\begin{figure}[h!]
	\centering
	\includegraphics[width=0.58\textwidth,height=3.5cm]{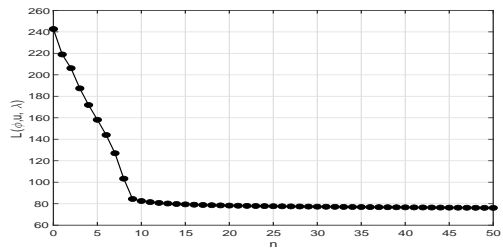} 
	\caption{Objevtive functional for the bypass.}
	\label{fig:bypass-obj}
\end{figure}

\subsection{Pipe bend} Next, we consider an example on the design of a pipe bend. 
The computational domain is still a square $D=[0,1]^2$ and the prescribed volume fraction is $\beta=0.2762$. Differ from the foregoing examples, the initial shape is designed as the multiple disconnected regions. 
What's more, the initial phase-field function for the pipe bend is described as,
$$
\phi^0(\mathbf{x})=
\left\{
\begin{aligned}
& 1, \ \mathbf{x} \in D \backslash \Omega_4,\\
& 0, \ \mathbf{x} \in \Omega_4,
\end{aligned}
\right.
$$
where the initial shape is given as $\phi^0(\mathbf{x})=1$ in the domain $D \backslash \Omega_4$ and the void domain $\Omega_4$ contains the sixteen similar circles with the same radius $r=0.12$. The initial phase-field function and the corresponding velocity can be found in Fig.\ref{fig:Pipebend-phi}(a).
\begin{figure}[h!]
	\centering
	\subfigure[Initial]{
		\includegraphics[width=0.3\linewidth]{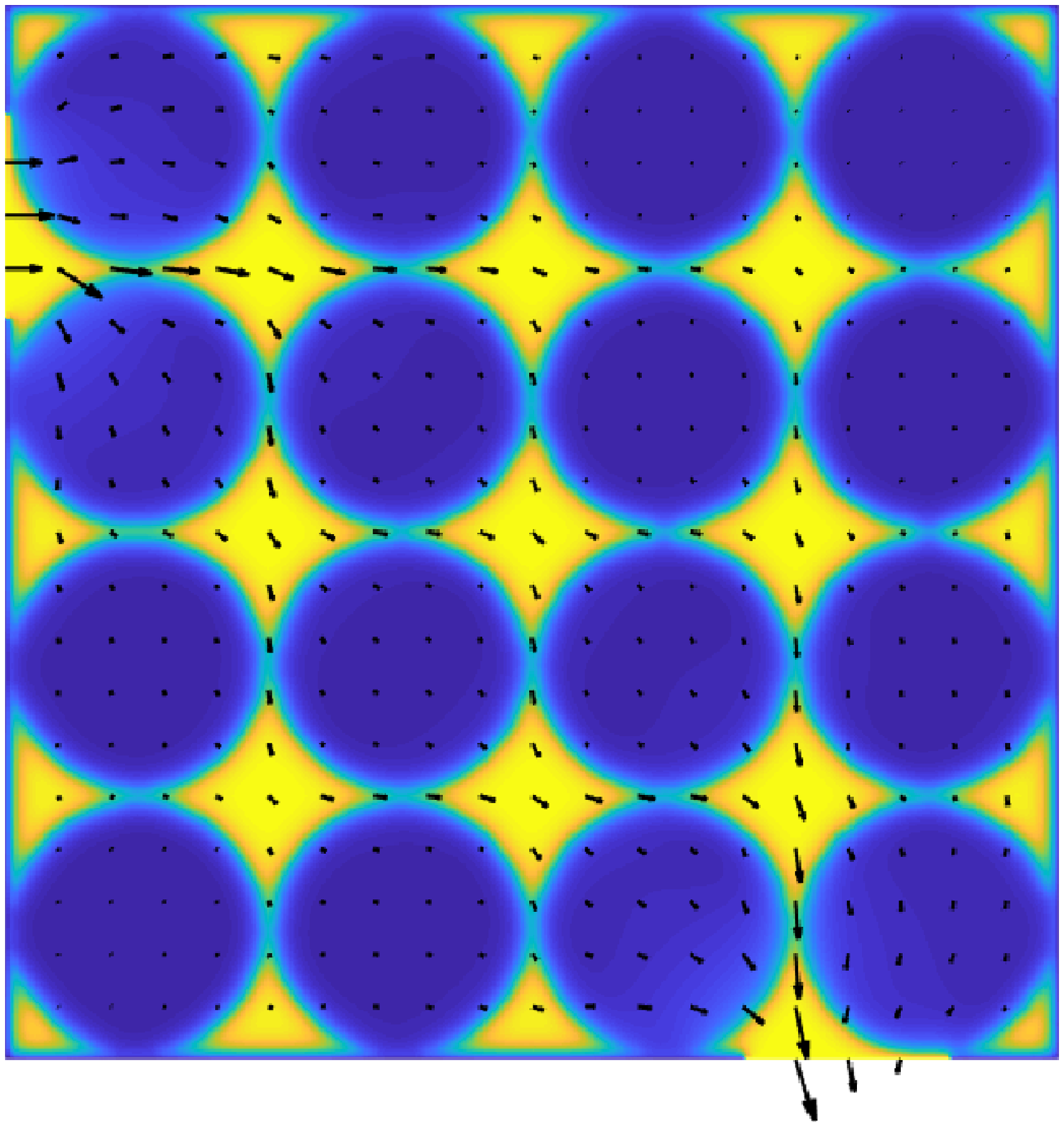}}
	\subfigure[$1$st step]{
		\includegraphics[width=0.3\linewidth]{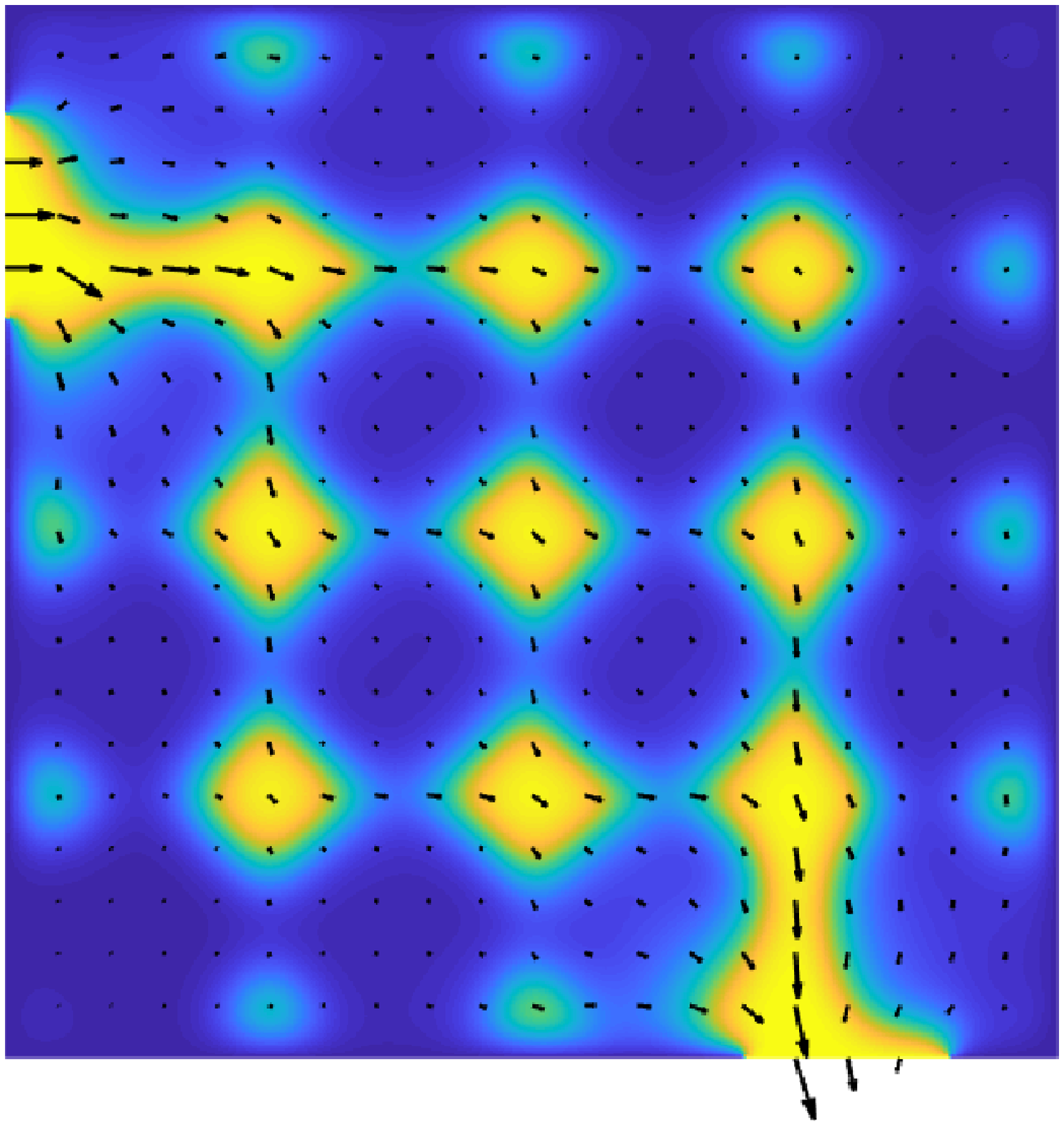}}
	\subfigure[$2$ed step]{
		\includegraphics[width=0.3\linewidth]{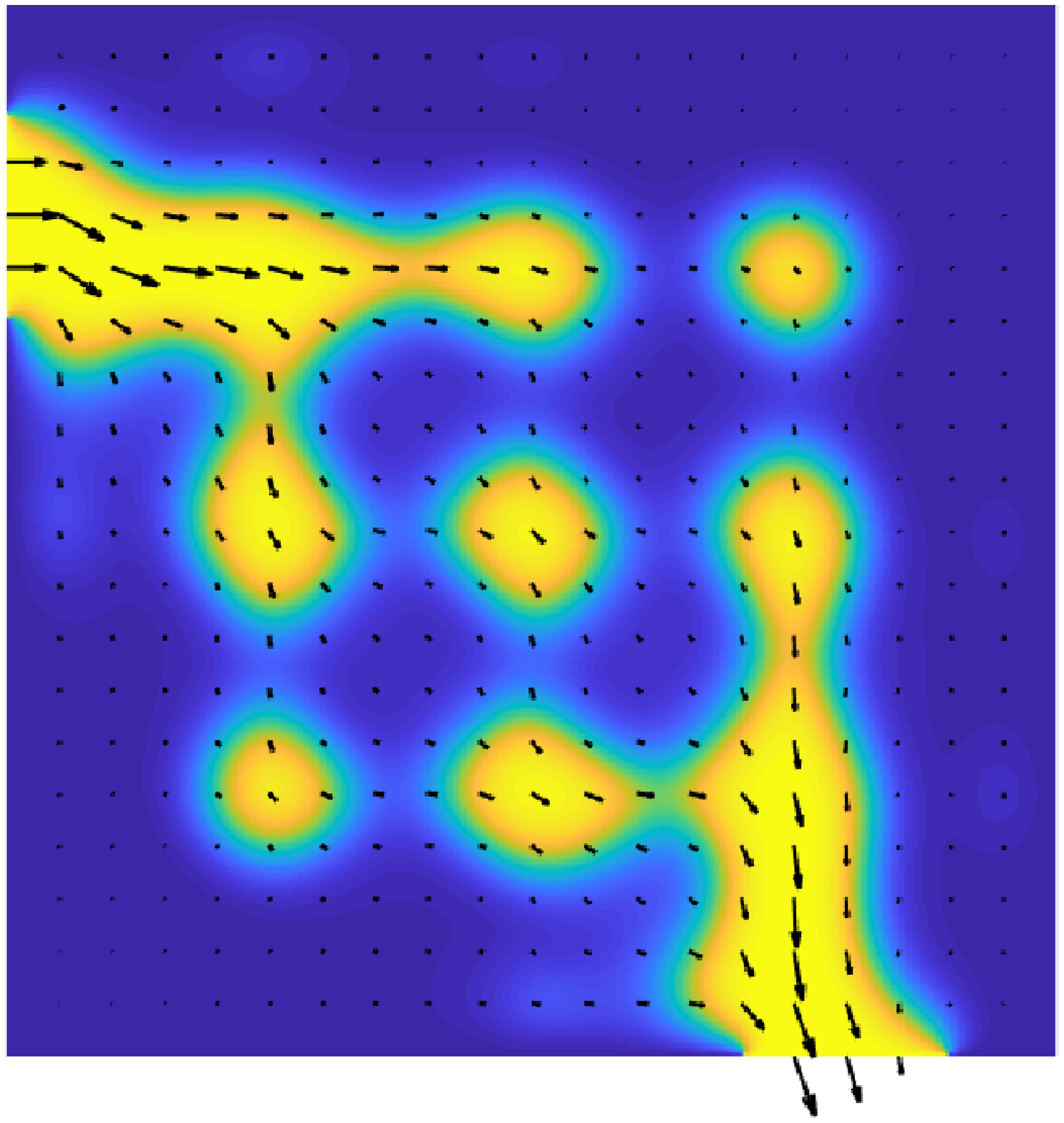}}\\
	\subfigure[$4$th step]{
		\includegraphics[width=0.3\linewidth]{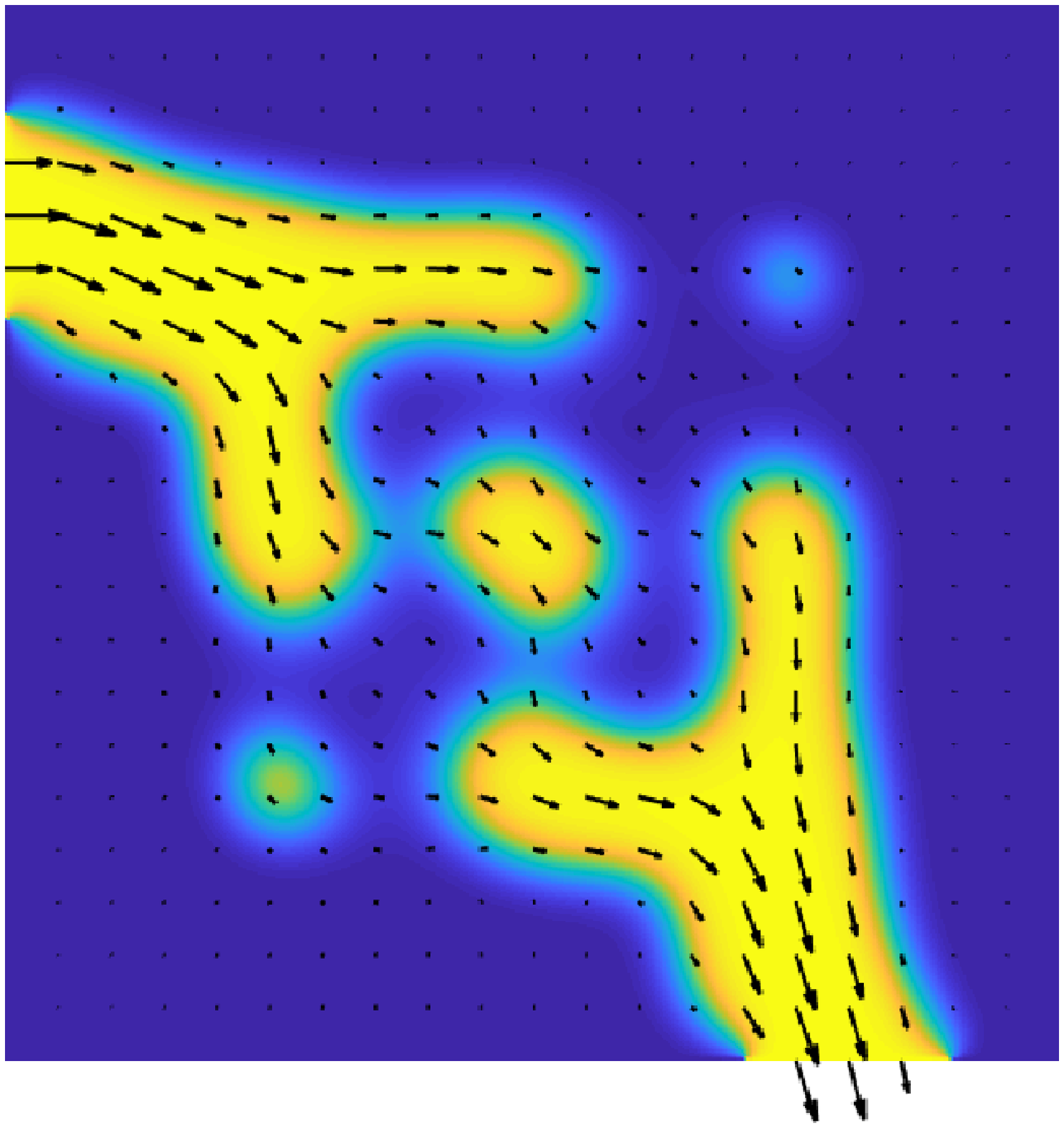}}
	\subfigure[$6$th step]{
		\includegraphics[width=0.3\linewidth]{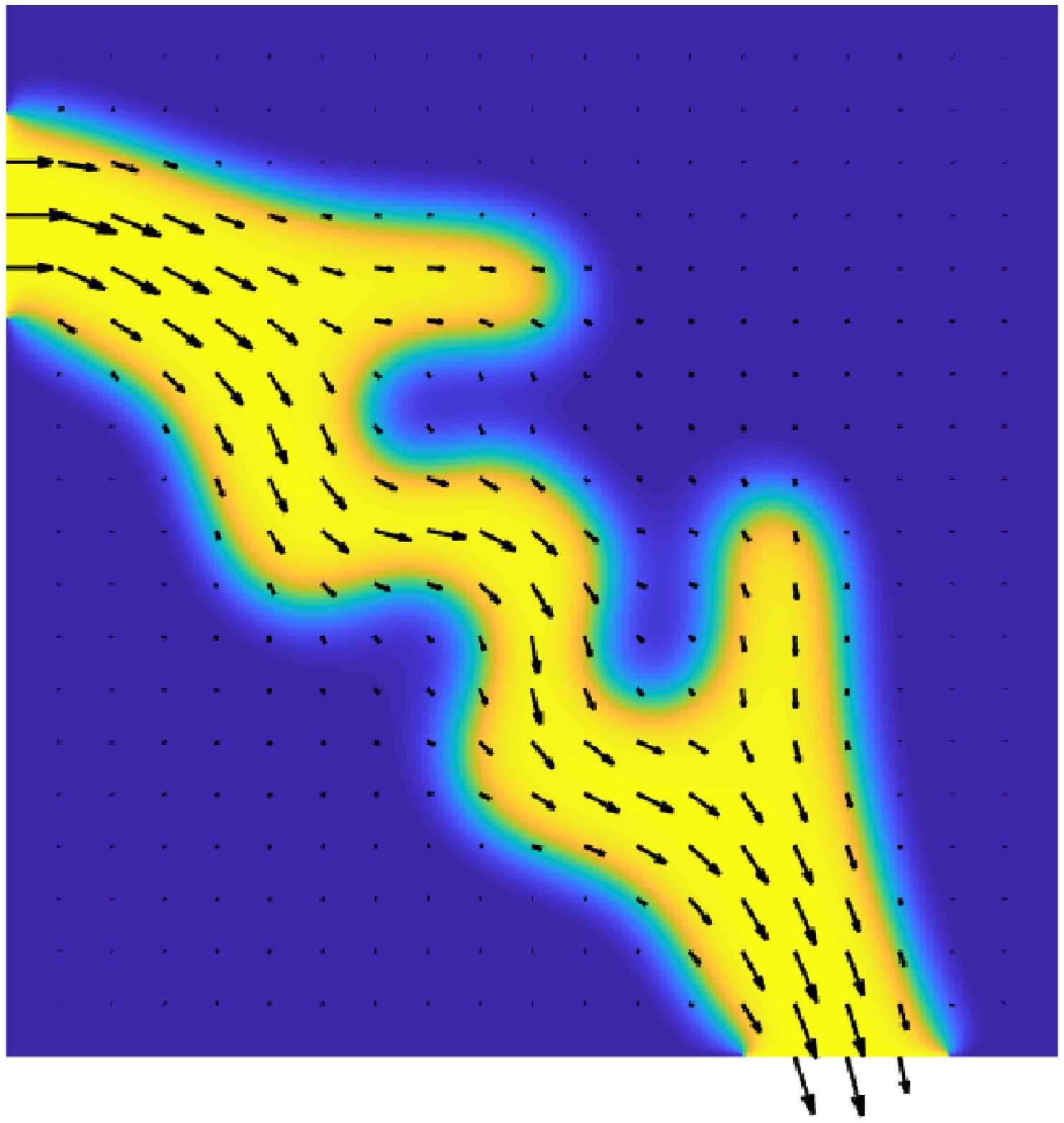}}
	\subfigure[$20$th step]{
		\includegraphics[width=0.3\linewidth]{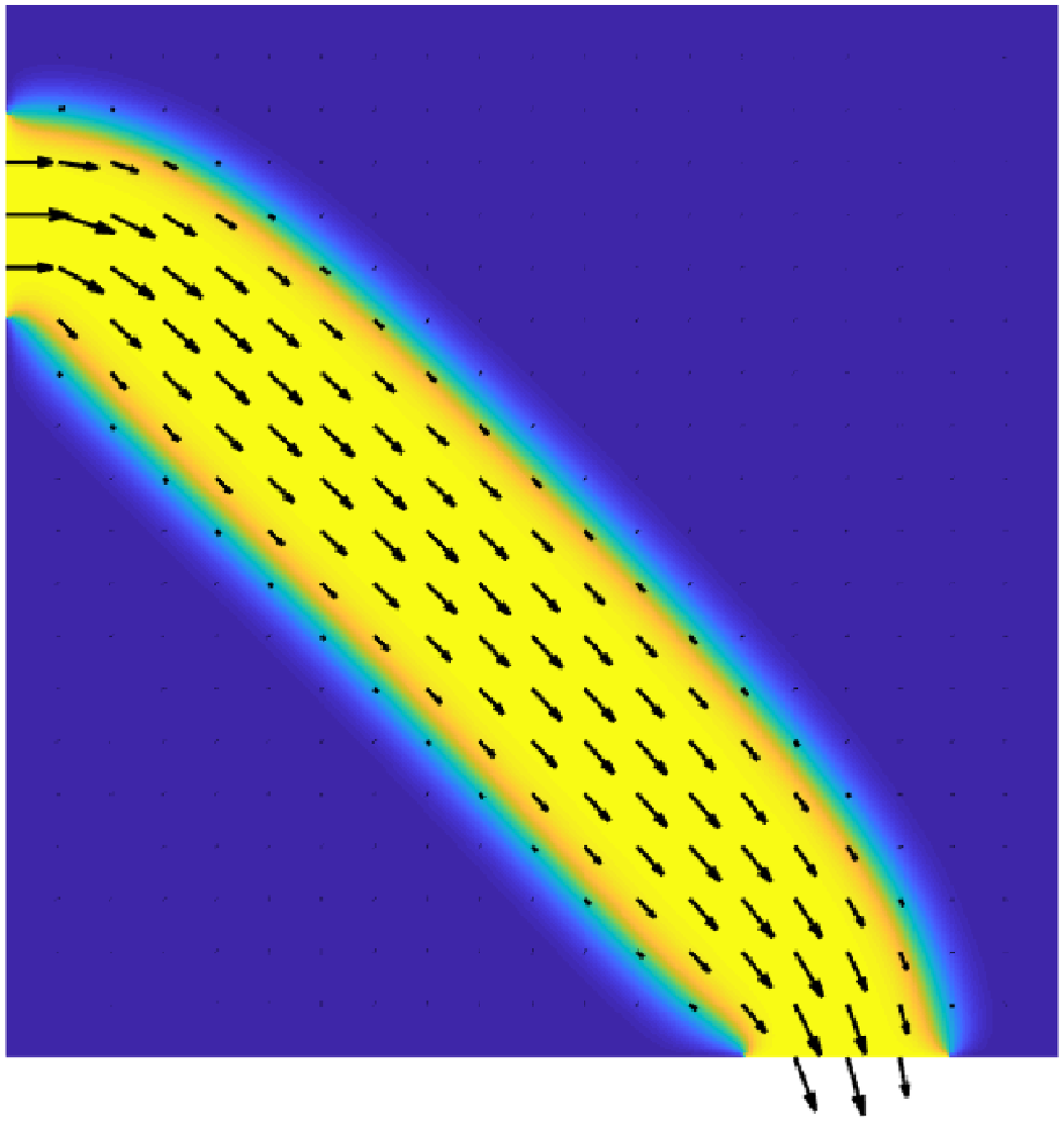}}
	\caption{Phase field functions and the fluid velocity for the pipe bend.}
	\label{fig:Pipebend-phi}
\end{figure}

Fig.\ref{fig:pipebend-obj} shows the history of objective functional and a minimum value 8.7916 appears approximately at the $20$th step, where the objective functional decreases strictly as time increases. Fig.\ref{fig:Pipebend-phi} shows the results of evolution on the phase-field functions and fluid velocity. We find that the topological structure has been changed as time goes, and ultimately forms a pipe bend.
\begin{figure}[h!]
	\centering
	\includegraphics[width=0.58\textwidth,height=3.5cm]{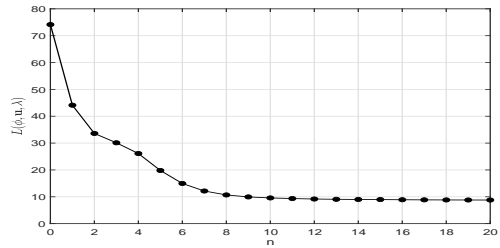} 
	\caption{Objevtive functional for the pipe bend.}
	\label{fig:pipebend-obj}
\end{figure}

\subsection{Diffuser in 3D} We now consider the design of a diffuser in 3D \cite{ChenAn}.
The computational domain is set to be $(D=[0,1]^3)$ and the maximum flow velocity is $\bar{g} = 1$ at the inlet $\Gamma_i$ on the left. 
In our calculations, the prescribed volume fraction is set to be $\beta = 0.328$. 
The initial phase-field function for the diffuser in 3D is described as,
$$
\phi^0(\mathbf{x})=
\left\{
\begin{aligned}
	& 1, \ \mathbf{x} \in \Omega_5,\\
	& 0, \ \mathbf{x} \in D \backslash \Omega_5,
\end{aligned}
\right.
$$
where we set the domain $\Omega_5:=\left\{(x,y)| x \leq0.3\} \cup  \{(x,y)| 0.4 \leq y \leq 0.6, 0.4 \leq z \leq 0.6\right\}$ and give the initial shape that is described as $\phi^0(\mathbf{x})=1$ in the domain $\Omega_5$. 
\begin{figure}[h!]
	\centering
	\subfigure[$\phi=0.5$ at the 1st step]{
		\includegraphics[width=0.30\linewidth]{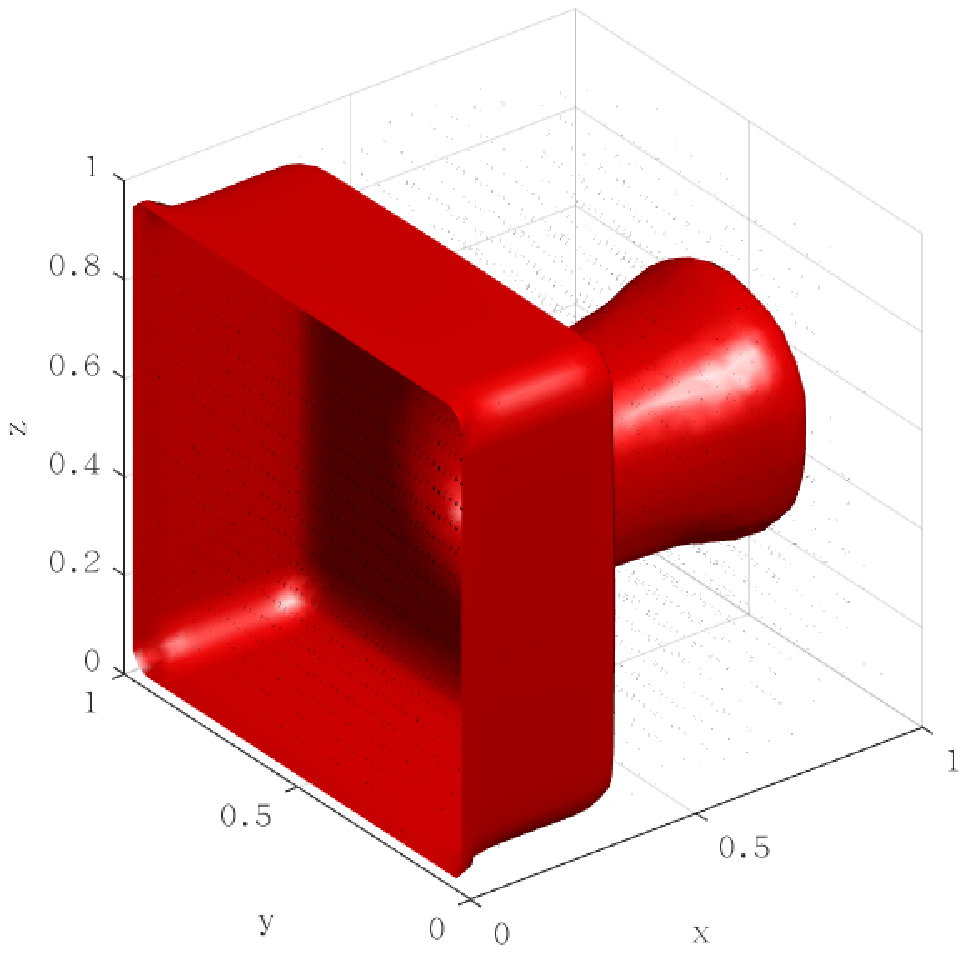}}
	\subfigure[$\phi=0.5$ at the 5th step]{
		\includegraphics[width=0.30\linewidth]{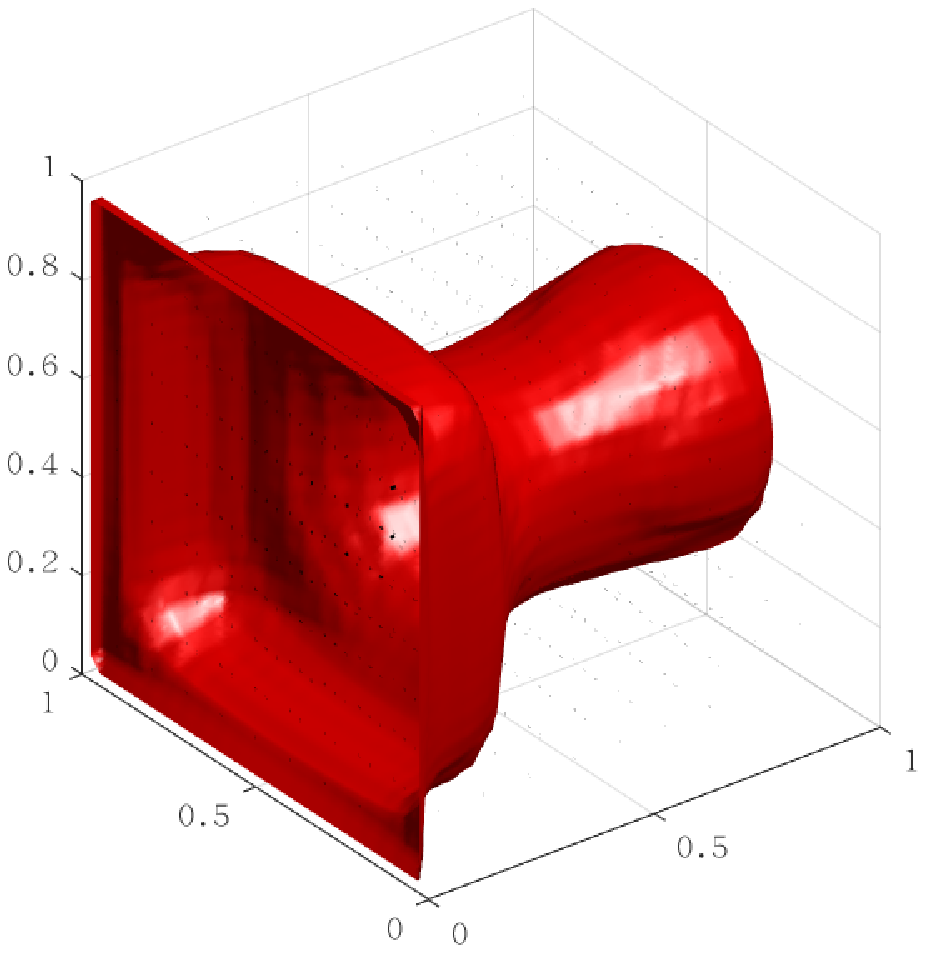}}
	\subfigure[$\phi=0.5$ at the 30th step]{
		\includegraphics[width=0.30\linewidth]{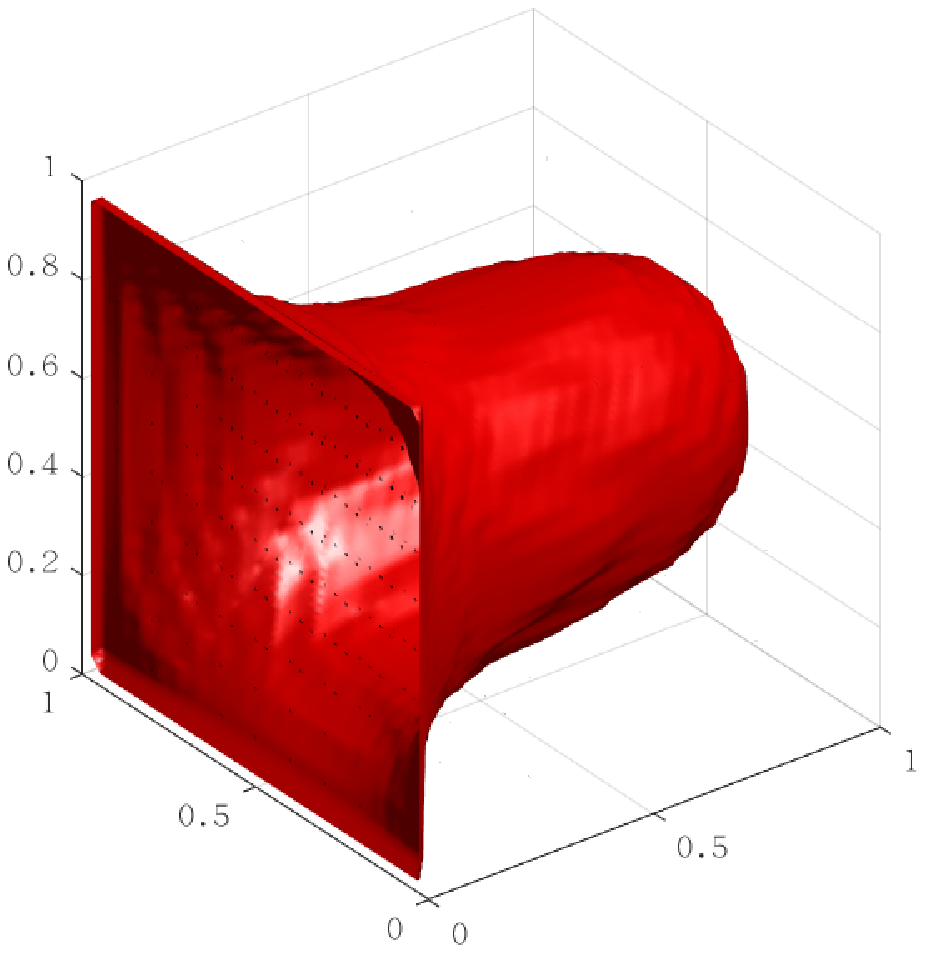}} \\
	\subfigure[slice at the 1st step]{
		\includegraphics[width=0.30\linewidth]{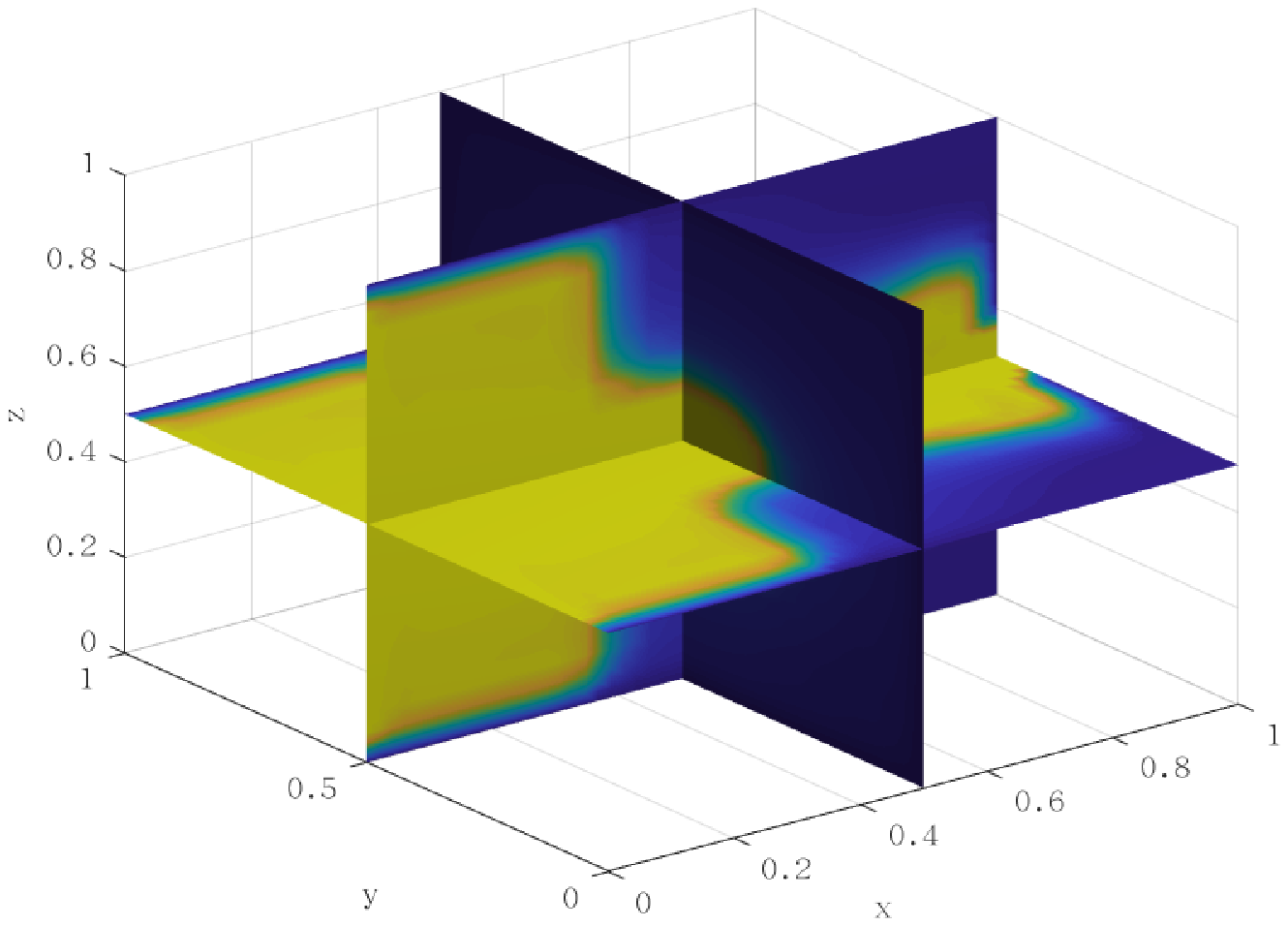}}
	\subfigure[slice at the 5th step]{
		\includegraphics[width=0.30\linewidth]{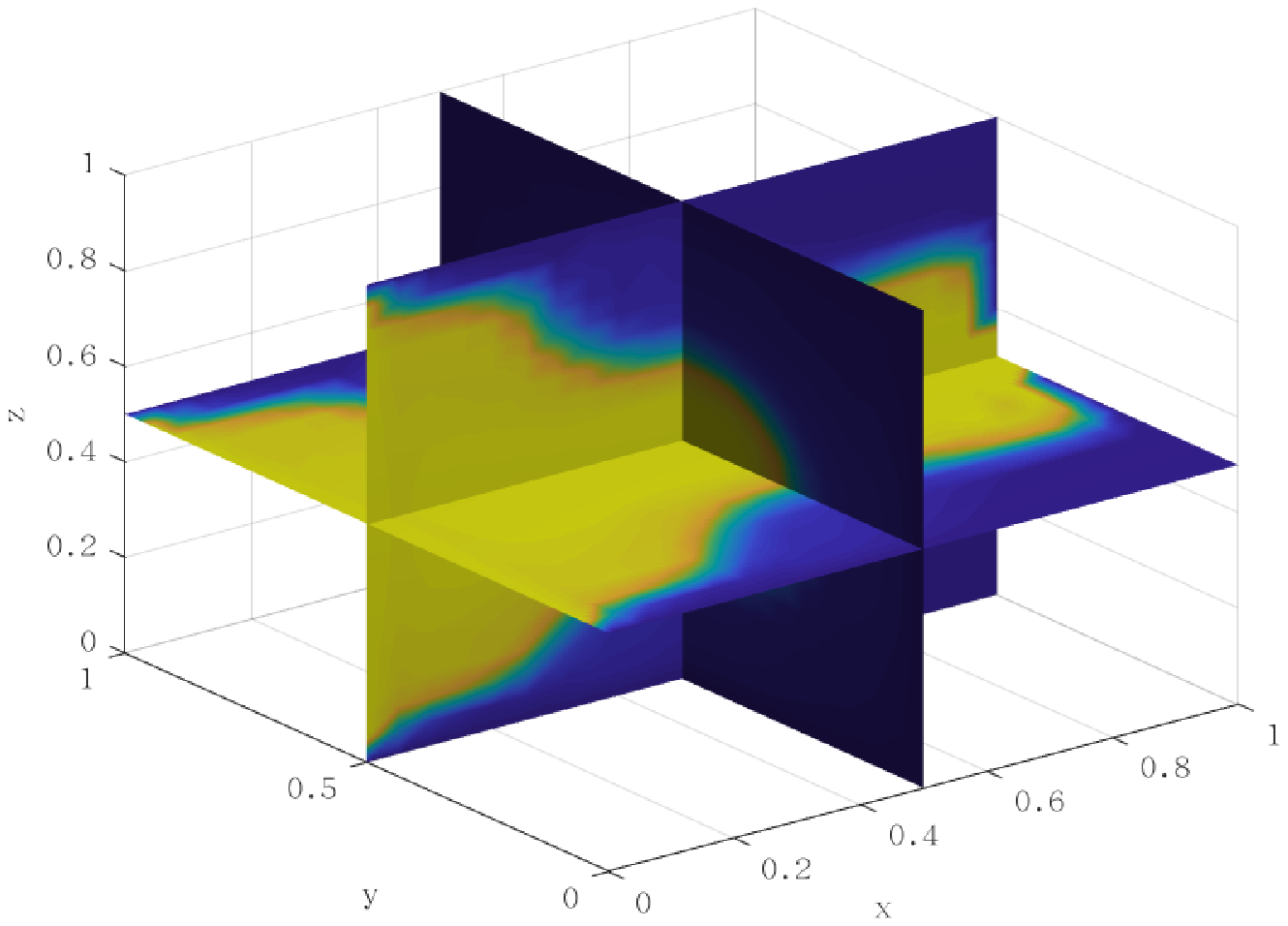}}
	\subfigure[slice at the 30th step]{
		\includegraphics[width=0.30\linewidth]{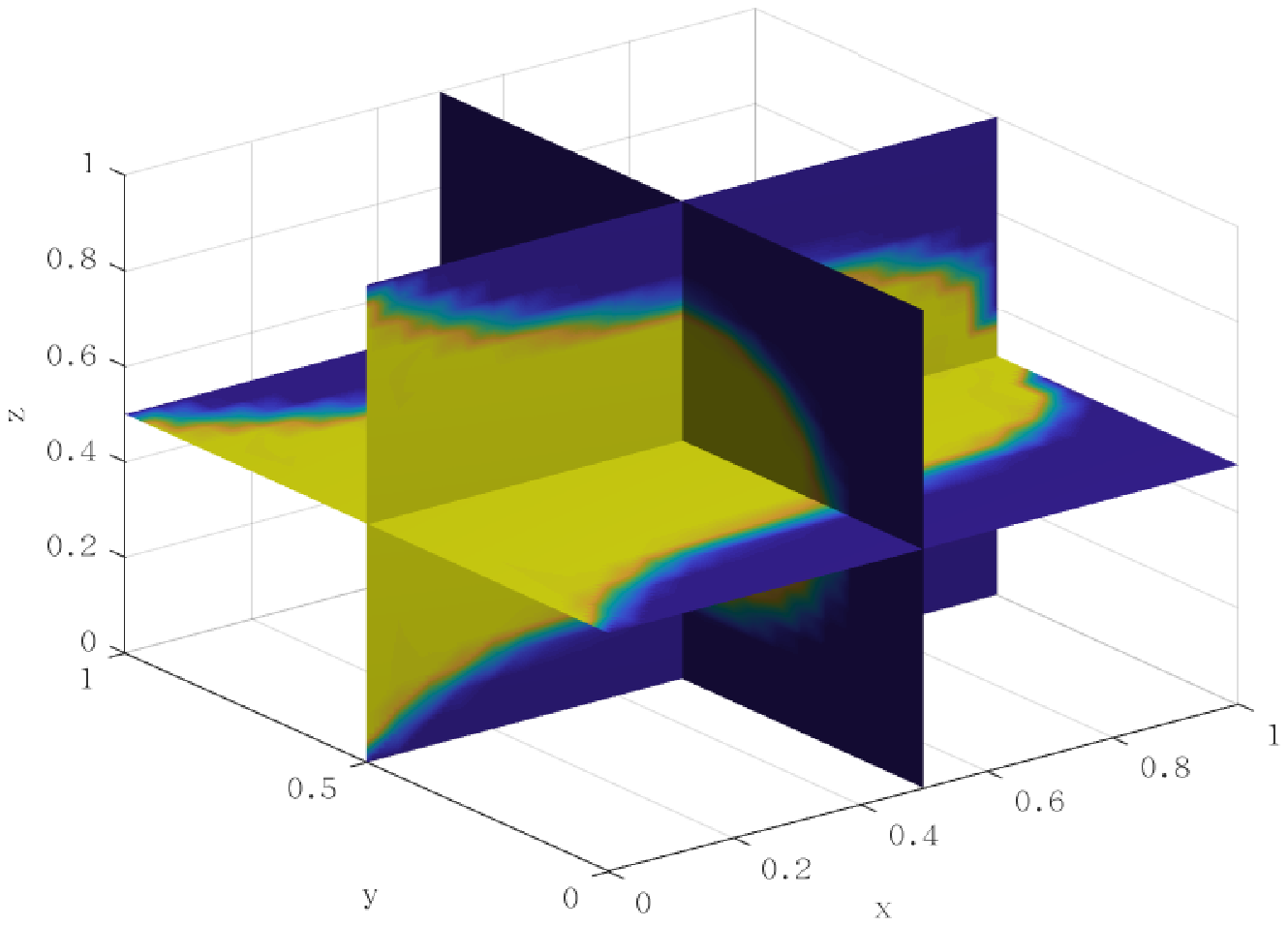}}
	\caption{Phase field functions $\phi=0.5$ for the diffuser in 3D in the first row; slices ($x=0.5,y=0.5,z=0.5$) of the optimal design results in the second row.}
	\label{fig:diffuser3D-phi}
\end{figure}
\begin{figure}[h!]
	\centering
	\includegraphics[width=0.58\textwidth,height=3.5cm]{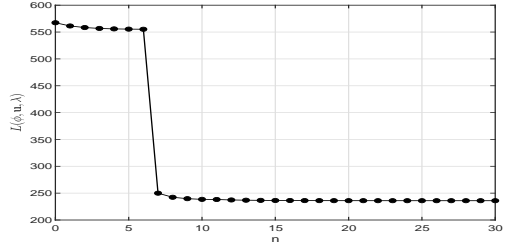} 
	\caption{Objevtive functional for the diffuser in 3D.}
	\label{fig:diffuser3D-obj}
\end{figure}

Fig.\ref{fig:diffuser3D-obj} shows the history of objective functional and it approximately reaches a steady state around the 30th step. Obviously, for the energy functional $L(\phi,\mathbf{u},\lambda)$, it has the monotonic-decaying property, which also matches the forgoing theoretical result.
Furthermore, we provide the isolines and the slices of optimal design result in Fig.\ref{fig:diffuser3D-phi}. The optimal design results seem to be similar to that in \cite{ChenAn}.

\section{Conclusions}
\label{sec:conclusions}
We have proposed an efficient decoupled monotonic-decaying algorithm for shape optimization in Stokes flows with the phase-field method, combining a cut-off postprocessing technique of the phase-field function and a linear piecewise finite element approximation for the phase-field variable. Moreover, we rigorously proved that this decoupled scheme can decrease the objective function step by step. Numerical examples in 2D and 3D are carried out to validate the effectiveness of our proposed numerical scheme.

The algorithm is also directly applicable to the structure optimization with minimizing the compliance \cite{Takezawa10Shape}. But the scope of the present article still has its limitation: it requires a strong relevance between the variation of objective functional with respect to the state variable and the state equation, and it is nontrivial to extend such decoupled monotonic-decaying algorithm for general optimal control problems. Hence, it would be a meaningful and challenge future work.

%In the future, we will mainly focus on efficient monotonic-decaying algorithms for the more general objective functionals in the field of shape optimization based on the phase-field approaches, especially for the inverse obstacle scattering problems.

\section*{\bf Acknowledgements}

This work is partially supported by  the National Natural Science Foundation of China (NSFC) Grant No. 11871264, the NSFC/Hong Kong RGC Joint Research Scheme (NSFC/RGC 11961160718), and the fund of the Guangdong Provincial Key Laboratory of Computational Science and Material Design (No. 2019B030301001). 

\bibliographystyle{siamplain}
\bibliography{references}
\end{document}